\documentclass[leqno]{etna} 
\usepackage[utf8]{inputenc}
\usepackage{amsmath}
\usepackage{amsfonts}
\usepackage{graphicx}
\usepackage{algorithmic}
\usepackage{tikz}
\usepackage{smartdiagram}
\usepackage{float}
%\usepackage{lineno} \linenumbers

% ... User specified LaTeX commands
\floatstyle{ruled}
\newfloat{algorithm}{tbp}{loa}
\floatname{algorithm}{Algorithm}
\usepackage{algorithmic}
\newcommand{\ru}{\mathbf{u}}		%	roundoff unit

\newcommand{\norm}[1]{\Vert #1 \Vert}

% ... set up ETNA style
\setbibdata{1}{23}{xx}{20xx} %{first page}{last page}{volume}{year}

\hypersetup{
  pdftitle={Exact Lanczos algorithm},
  pdfauthor={Dorota \v{S}imonov\'a and Petr Tich{\'y}}
  pdfkeywords={Lanczos algorithm, exact computations, finite precision arithmetic, rounding errors}
}

\title{When does the lanczos algorithm compute exactly?\thanks{The work of P. Tich{\'y} was supported by the Grant Agency of the Czech Republic under the grant  no. 20-01074S.
}}

\author{Dorota \v{S}imonov\'a\footnotemark[2] \and Petr Tich{\'y}\footnotemark[2]}

\shorttitle{Exact Lanczos algorithm}
\shortauthor{D. \v{S}imonov\'a and P. Tich{\'y}}

\begin{document}
\maketitle
\renewcommand{\thefootnote}{\fnsymbol{footnote}}
\footnotetext[2]{Faculty of Mathematics and Physics, Charles University, Prague, Czech Republic. \\
e-mail: {\tt simonova@karlin.mff.cuni.cz}, {\tt petr.tichy@mff.cuni.cz}.}

\begin{abstract}
In theory, the Lanczos algorithm generates an orthogonal basis of
the corresponding Krylov subspace. However, in finite precision arithmetic,
the orthogonality and linear independence of the computed Lanczos
vectors is usually lost quickly. In this paper we study a class of
matrices and starting vectors having a special nonzero structure that
guarantees exact computations of the Lanczos algorithm whenever floating point arithmetic satisfying the IEEE 754 standard is used. Analogous
results are formulated also for a variant of the conjugate gradient
method that produces then almost exact results. The results are extended
to the Arnoldi algorithm, the nonsymmetric Lanczos algorithm, the Golub-Kahan bidiagonalization, the block-Lanczos algorithm and their counterparts for solving linear systems.
\end{abstract}

\begin{keywords}
Lanczos algorithm, exact computations, finite precision arithmetic, rounding errors
\end{keywords}

\begin{AMS}
65F10, 65F15
\end{AMS}

\section{Introduction}
Let a real and symmetric matrix $A$ and a starting vector $v$
be given. The Lanczos algorithm is a frequently used
algorithm for computing an orthogonal basis of the corresponding Krylov
subspace. At the same time, it can be
seen as a method for approximating a few eigenvalues (and eventually
eigenvectors) of $A$, using the underlying Rayleigh-Ritz procedure;
see, e.g., \cite{B:Pa1980}.

Since the introduction of the algorithm in 1950
by Lanczos \cite{La1950} 
it has been known
that the orthogonality of the
computed basis vectors need not be preserved due to rounding errors.
As a consequence, an eigenvalue of $A$ can be approximated by several
eigenvalues of the Jacobi matrix produced by the 
Lanczos algorithm in finite precision arithmetic. 

The numerical behavior of the Lanczos algorithm was analyzed by 
Paige \cite{Pa1976,Pa1980a}. Paige showed that the effects
of rounding errors on the Lanczos algorithm can be described mathematically.
Based on these results, Greenbaum \cite{Gr1989} proved that
the results of finite precision computations can be interpreted
as the results of the exact Lanczos algorithm applied to a larger
problem with a matrix having many eigenvalues distributed throughout
tiny intervals around the eigenvalues of $A$. In other words, Greenbaum
found and constructed a mathematical model of the finite precision Lanczos
computations. In particular, Greenbaum's model matrix is a Jacobi
matrix, and the starting vector is a multiple of the first column
$e_{1}$ of the identity matrix. Results of Paige and Greenbaum stimulated
further development in the analysis of the numerical behavior of
the Lanczos and the conjugate gradient (CG) algorithms; see, e.g., \cite{St1991,GrSt1992,W2005,Wu2006}.
% StGr1992,
For a comprehensive summary and a detailed explanation; see \cite{MeSt2006}.

In this paper we prove and extend an interesting observation made
by Marie Kub\'{\i}nov\'a in her PhD thesis \cite[p. 77]{T:Ku2018}: {\em If the
Lanczos algorithm is applied to a Jacobi matrix and a multiple of
$e_{1}$, then no rounding errors appear.} In other words, the finite
precision Lanczos algorithm computes exactly. Note that we also formulate
analogous statement for a variant of the CG algorithm that provides,
for the above mentioned input data, almost exact results (within
the relative accuracy given by machine precision). The obtained  results have
several consequences discussed in detail in Section~\ref{sec:cons}, that could
be useful in further analysis of the behavior of the Lanczos and CG
algorithms. For example, they allow to investigate experimentally the theoretical behavior
of the Lanczos algorithm for potentially very large systems by forming
a tridiagonal matrix with the desired properties, and then running
the Lanczos algorithm with the starting vector $e_{1}$ without reorthogonalization. 

The paper is organized as follows. In Sections \ref{sec:lan} and \ref{sec:Exact} we recall the standard version of the Lanczos algorithm, and summarize operations
and transformations that are performed exactly in floating point arithmetic.
Section~\ref{sec:flan} investigates a nonzero structure of
the input data, that ensures the exact computations of the Lanczos algorithm
in floating point arithmetic satisfying the IEEE 754 standard. In
Section~\ref{sec:cg} we formulate analogous results for a variant of the CG method.
Section~\ref{sec:other} shows that the results of Sections~\ref{sec:flan} and \ref{sec:cg} can be generalized to other algorithms like the Arnoldi algorithm, the nonsymmetric Lanczos algorithm, the Golub-Kahan bidiagonalization and the block-Lanczos algorithm and their counterparts for solving
linear systems. Finally, in Section~\ref{sec:cons} we discuss consequences and a possible use of the obtained results.

\section{Lanczos algorithm}
\label{sec:lan}
Given a starting vector $v\in\mathbb{R}^{n}$ and a symmetric matrix
$A\in\mathbb{R}^{n\times n}$, one can consider a sequence of nested
subspaces 
\[
\mathcal{K}_{k}(A,v)=\mathrm{span}\{v,Av,\dots,A^{k-1}v\}%\mathrm{span}\{v_{1},\dots,v_{j}\},\quadj=1,\dots,k+1.
\]
called the Krylov subspaces. The dimension of these subspaces is increasing
up to an index $d=d(A,v)$ called the \emph{degree of $v$ with respect
to $A$}, for which the maximal dimension is attained, and $\mathcal{K}_{d}(A,v)$
is invariant under multiplication with $A$. Having an index $k<d$,
the Lanczos algorithm (Algorithm~\ref{alg:lanczos}) 
\begin{algorithm}[th]
\caption{Lanczos algorithm}
\label{alg:lanczos}

\begin{algorithmic}[1]

\STATE \textbf{input} $A$, $v$ 

\STATE $\beta_{1}=\|v\|$, $v_{0}=0$ 

\STATE $v_{1}=v/\beta_{1}$ 

\FOR{$i=1,\dots,k$} 

\STATE $w=Av_{i}-\beta_{i}v_{i-1}$ 

\STATE $\alpha_{i}=w^{T}v_{i}$ 

\STATE $z=w-\alpha_{i}v_{i}$ 

\STATE $\beta_{i+1}=\|z\|$ 

\STATE\textbf{if} $\beta_{i+1}=0$ \textbf{then} \textbf{stop} 

\STATE $v_{i+1}=z/\beta_{i+1}$ 

\ENDFOR 

\end{algorithmic} 
\end{algorithm}
constructs an orthonormal basis $v_{1},\dots,v_{k+1}$ of the Krylov
subspace $\mathcal{K}_{k+1}(A,v)$. The Lanczos vectors $v_{j}$ satisfy
the three-term recurrence 
\begin{align}
\beta_{i+1}v_{i+1}=Av_{i}-\alpha_{i}v_{i}-\beta_{i}v_{i-1},\qquad i=1,\dots,k\label{threeterm}
\end{align}
or, written in the matrix form, 
\[
AV_{k}=V_{k}T_{k}+\beta_{k+1}v_{k+1}e_{k}^{T}
\]
where $V_{k}=[v_{1},\dots,v_{k}]$, the vector $e_{k}$ denotes the
$k$th column of the identity matrix of an appropriate size (here
of the size $k$), and $T_{k}$ is the $k$ by $k$ symmetric tridiagonal
matrix of the Lanczos coefficients, 
\begin{equation}
T_{k}=\left[\begin{array}{cccc}
\alpha_{1} & \beta_{2}\\
\beta_{2} & \ddots & \ddots\\
 & \ddots & \ddots & \beta_{k}\\
 &  & \beta_{k} & \alpha_{k}
\end{array}\right].\label{matrixT}
\end{equation}
Since the coefficients $\beta_{j}$ are positive, $T_{k}$ is a Jacobi
matrix. The Lanczos algorithm works for any symmetric matrix, but
if $A$ is positive definite, then $T_{k}$ is positive definite as
well.

During computations in floating point arithmetic, rounding errors
may have a significant influence on the computed results. In particular,
the orthogonality among the Lanczos vectors is usually lost very quickly.
In this paper we are interested in happy cases when this situation
does not happen. In more detail, assuming that $d=n$ and considering 
the standard model of floating point arithmetic
that satisfies the IEEE 754 standard, we look
for a nonzero pattern of $A$ and $v$ such that no rounding errors
appear during the computation of the Lanczos algorithm. The classical
examples of arithmetics satisfying the IEEE 754 standard are the double
precision (binary64), single precision (binary32), or half precision
(binary16).

\section{\label{sec:Exact}Exact computations in floating point arithmetic}
Let $\mathbb{F}$ denote the set of floating point numbers 
%If $\alpha\in\mathbb{R}$,
%then $\mathrm{fl}(\alpha)$ is its floating point representation and
%we assume that
%\begin{equation}\label{eqn:fl1}
%\mathrm{fl}(\alpha)=\alpha(1+\delta),\qquad|\delta|\leq\ru,
%\end{equation}
%where $\ru$ is the \emph{unit roundoff}. 
and let ``$\circ$'' is one of the basic operations (addition, subtraction, multiplication, division, square root).
Suppose that $\alpha$ and $\beta$
are floating point numbers and 
that $\alpha\circ\beta$ is within the exponent range (otherwise we get {\em overflow} or {\em underflow}). Denote 
the floating point result by 
$\mathrm{fl}(\alpha\circ\beta)$.
Then, considering 
the standard model of floating point arithmetic,
it holds that
\[
\mathrm{fl}(\alpha\circ\beta)=(\alpha\circ\beta)(1+\delta),\qquad|\delta|\leq\ru,
\]
where $\ru$ is the \emph{unit roundoff}. 
Obviously, if $\alpha\in\mathbb{F}$,
then 
\[
\mathrm{fl}(1*\alpha)=\alpha,\quad\mathrm{fl}(-\alpha)=-\alpha,\quad\mathrm{fl}(0*\alpha)=0,\quad\mathrm{fl}(\alpha-\alpha)=0,\quad\mathrm{fl}(\alpha/\alpha)=1.
\]
It is easy to see that if $P\in\mathbb{F}^{n\times n}$ is a permutation
matrix, $v\in\mathbb{F}^{n}$, $A\in\mathbb{F}^{n\times n}$, then
\[
\mathrm{fl}(P^{T}P)=I,\quad\mathrm{fl}(Pv)=Pv,\quad\mathrm{fl}(PA)=PA,\quad\mathrm{fl}(AP)=AP.
\]
%Let $\mathtt{realmax}$ (to be consistent with the Matlab notation)
%be the largest finite floating point number, and $\mathtt{realmin}$ 
%the smallest positive normalized floating point number. 
In the following lemma we show
that if $\alpha\in\mathbb{F}$ and if $\alpha^{2}$ is within the exponent range, then the square root of the second power
of $\alpha$ is computed exactly; see also \cite[Question 1.17]{B:De1997}.\smallskip
\begin{lemma}
\label{lemmatko} Consider the standard model of floating point arithmetic.
Let $\alpha\in\mathbb{F}$ be a floating point number such that $\alpha^{2}$ is within the exponent range.
%$\mathtt{realmin}\leq\alpha^{2}\leq\mathtt{realmax}$.
Then it holds that 
\[
|\alpha|=\mathrm{fl}\left(\sqrt{\mathrm{fl}\left(\alpha^{2}\right)}\right).
\]
\end{lemma}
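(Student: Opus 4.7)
The plan is to exploit that under IEEE 754, both multiplication and square root are \emph{correctly rounded}: the returned value is the nearest float to the true result. This is stronger than the bare $(1+\delta)$-model recalled in the preceding paragraph and is what makes exact recovery possible. Set $y:=\mathrm{fl}(\alpha^{2})$. Using $\mathrm{fl}(-\alpha)=-\alpha$ and evenness of squaring, I would reduce to $\alpha\ge 0$ and write $f:=|\alpha|\in\mathbb{F}$. Then my goal becomes: show that $f$ is the unique float nearest to $\sqrt{y}$, which amounts to proving the strict inequality $|\sqrt{y}-f|<\tfrac{1}{2}\,\mathrm{ulp}(f)$ (the asymmetric half-gap on the predecessor side of a power of two will be disposed of separately).

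The key algebraic identity is
\[
|\sqrt{y}-f| \;=\; \frac{|y-f^{2}|}{\sqrt{y}+f},
\]
which converts the problem to bounding $|y-f^{2}|$ from above and $\sqrt{y}+f$ from below. Correct rounding of the squaring gives the numerator bound $|y-f^{2}|\le\tfrac{1}{2}\,\mathrm{ulp}(f^{2})$. For the denominator I would use $\sqrt{y}\ge f\sqrt{1-\ru}\ge f(1-\ru)$, yielding $\sqrt{y}+f\ge f(2-\ru)$, which for small $\ru$ is essentially $2f$. Dividing, $|\sqrt{y}-f|$ is bounded by approximately $\mathrm{ulp}(f^{2})/(4f)$, and the rest is a direct comparison with $\tfrac{1}{2}\,\mathrm{ulp}(f)=\ru\cdot 2^{e}$.

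The main technical point, and the one that I would handle most carefully, is that the relation between $\mathrm{ulp}(f^{2})$ and $\mathrm{ulp}(f)$ depends on whether the binary significand $m$ of $f$ (with $f=m\cdot 2^{e}$, $1\le m<2$) satisfies $m^{2}<2$ or $m^{2}\ge 2$: in the first case $\mathrm{ulp}(f^{2})=2\ru\cdot 2^{2e}$, while in the second it doubles to $2\ru\cdot 2^{2e+1}$, because the exponent of $f^{2}$ jumps by one. Inserting these into the bound above, I would verify the strict inequality $|\sqrt{y}-f|<\ru\cdot 2^{e}$ in each case, using $m>1$ in the first and $m\ge\sqrt{2}$ in the second. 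The only remaining possibility is $m=1$, i.e., $f$ is a power of two; then $f^{2}$ is also a power of two and representable exactly, so $y=f^{2}$ and $\sqrt{y}=f$ trivially, which simultaneously dispatches the awkward predecessor-side half-gap.
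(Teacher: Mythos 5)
Your proposal is correct, and it rests on the same essential mechanism as the paper's proof: IEEE~754 correct rounding of the square root reduces the claim to showing that $|\alpha|$ is the floating point number nearest to $\sqrt{\mathrm{fl}(\alpha^{2})}$. Where you differ is in how that nearness is established, and your route is the more rigorous one. The paper Taylor-expands $\sqrt{1+\delta}$ to get $\sqrt{\beta}=\alpha(1+\delta/2+O(\ru^{2}))$ and compares this against the assertion that the neighbours of $\alpha$ are $\alpha(1\pm 2\ru)$; in reality the neighbours of $\alpha=m\cdot 2^{e}$ sit at relative distance $2\ru/m\in[\ru,2\ru]$, and the predecessor gap halves at a power of two, so the paper's comparison is loosest exactly where the margin is thinnest (for $m=1$ the midpoint to the predecessor is $\alpha(1-\ru/2)$, which the Taylor bound alone does not clear). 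You avoid this by working with the exact identity $|\sqrt{y}-f|=|y-f^{2}|/(\sqrt{y}+f)$, bounding the numerator by $\tfrac12\mathrm{ulp}(f^{2})$ via correct rounding of the product and the denominator below by $f(2-\ru)$, and then tracking $\mathrm{ulp}(f^{2})$ against $\mathrm{ulp}(f)$ through the two binade cases $m^{2}<2$ and $m^{2}\ge 2$; the resulting inequalities $m(2-\ru)>1$ and $m(2-\ru)>2$ are strict for $m>1$ and $m\ge\sqrt{2}$ respectively, and the residual case $m=1$ is exact because a power of two squares to a representable power of two, which simultaneously disposes of the asymmetric predecessor half-gap. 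In short: the paper's argument is shorter and conveys the intuition, while yours is the version that actually closes the power-of-two edge case the paper glosses over.
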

\begin{proof}
Assume without loss of generality that $\alpha\geq0$, otherwise we
replace $\alpha$ by $|\alpha|$ in the text below. For $\beta\equiv\mathrm{fl}(\alpha^{2})$
it holds that 
\[
\beta=\alpha^{2}(1+\delta),\qquad|\delta|\leq\ru,
\]
and the exact square root of $\beta\in\mathbb{F}$ is given by 
\begin{equation}
\sqrt{\beta}=\alpha\sqrt{1+\delta}=\alpha\left(1+\frac{\delta}{2}-\frac{\delta^{2}}{8}+\frac{\delta^{3}}{16}+O(\delta^{4})\right)=\alpha\left(1+\frac{\delta}{2}+O(\ru^{2})\right),\label{eqn:sqrtb}
\end{equation}
where we have used the Taylor expansion of $\sqrt{1+\delta}$.

The IEEE 754 standard of floating point arithmetic guarantees
that $\mathrm{fl}(\sqrt{\beta})$ is the nearest floating point number
to the exact value of $\sqrt{\beta}$. Since $\alpha$ is a floating
point number, the two nearest floating point numbers to $\alpha$
are given by $\alpha(1\pm2\ru)$, where $2\ru$ is the machine epsilon.
In other words, 
\begin{equation}
\alpha(1-2\ru),\ \alpha,\ \alpha(1+2\ru),\label{eqn:3a}
\end{equation}
are three consecutive floating point numbers. Comparing \eqref{eqn:sqrtb}
and \eqref{eqn:3a}, the nearest floating-point number to $\sqrt{\beta}$
is $\alpha$. 
\end{proof}
\smallskip

Considering a vector 
\begin{equation}
z=\alpha e_{j},\quad\alpha\in\mathbb{F},
%\quad\mathtt{realmin}\leq \alpha^{2}\leq\mathtt{realmax},
\label{eq:special}
\end{equation}
such that $\alpha^2$ is within the exponent range, then
the previous lemma shows that the Euclidean norm of $z$ is in the standard
model of floating point arithmetic computed exactly. On the other
hand, if $z$ is not a multiple of $e_{j}$, then, in general, one
can expect that rounding errors occur. In other words, the only
structure of $z$ that guarantees that no rounding errors occur during the computation of its Euclidean norm is the structure \eqref{eq:special}.

\section{Lanczos algorithm in floating point arithmetic}
\label{sec:flan}

On line 8 of Algorithm~\ref{alg:lanczos}, the Euclidean norm of the vector
$z$ is computed. To guarantee that the Lanczos algorithm
computes exactly for any matrix and any starting vector having a given
structure, the Lanczos vectors must
necessarily be equal to the columns of the identity matrix (up to
the sign); see \eqref{eq:special} and the discussion herein. In particular, since the normalized starting vector is
the first Lanczos vector $v_{1}$, it must hold that $v_{1}=\pm e_{j}$
for some $j=1,\dots,n$. To simplify the notation, we define the \textit{signed permutation matrix} as the permutation matrix with the entries $\pm 1$ instead of $1$. In the following lemma we investigate the parametrization of all matrices $A$ and vectors $v$ with $d=n$
such that the exact Algorithm~\ref{alg:lanczos} produces Lanczos vectors
having just one nonzero entry.\smallskip

\begin{lemma}\label{lem:equiv} 
Assuming exact arithmetic, Algorithm~\ref{alg:lanczos}
applied to a symmetric $A\in\mathbb{R}^{n\times n}$ and 
$v\in\mathbb{R}^{n}$ such that $d=n$ produces Lanczos vectors equal
to plus or minus columns of the identity matrix if and only if 
\[
A=PTP^{T},\qquad v=\widetilde{\beta}_{1}Pe_{1},
\]
with $P\in\mathbb{R}^{n\times n}$ being a signed permutation matrix, and $T\in\mathbb{R}^{n\times n}$ being a tridiagonal matrix of the form 
\[
T=\left[\begin{array}{cccc}
\widetilde{\alpha}_{1} & \widetilde{\beta}_{2}\\
\widetilde{\beta}_{2} & \ddots & \ddots\\
 & \ddots & \ddots & \widetilde{\beta}_{n}\\
 &  & \widetilde{\beta}_{n} & \widetilde{\alpha}_{n}
\end{array}\right],
\]
where \textup{$\widetilde{\beta}_{j}>0$, $j=1,\dots,n$.} Moreover, the tridiagonal matrix $T_n$ resulting from Algorithm~\ref{alg:lanczos} is equal to $T$.
\end{lemma}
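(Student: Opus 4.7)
My plan is to establish the equivalence by exploiting two complementary facts: first, that Algorithm~\ref{alg:lanczos} is covariant under orthogonal change of basis; and second, that any orthonormal basis whose vectors are each $\pm e_j$ is exactly the set of columns of a signed permutation matrix. The ``moreover'' clause will fall out of the $\Leftarrow$ argument for free.

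For the $\Leftarrow$ direction I would first observe that conjugation by $P$ commutes with the recurrence \eqref{threeterm}: if Lanczos applied to $(T,\widetilde{\beta}_1 e_1)$ produces vectors $\widetilde v_i$ and Jacobi matrix $\widetilde T_n$, then Lanczos applied to $(PTP^T,\widetilde{\beta}_1 Pe_1)$ produces $P\widetilde v_i$ with the same $\widetilde T_n$. It therefore suffices to show that Lanczos applied directly to a tridiagonal $T$ with positive subdiagonal, starting from $\widetilde{\beta}_1 e_1$, yields $\widetilde v_i = e_i$ and recovers $\widetilde T_n = T$. This is a short induction on $i$: assuming $\widetilde v_i = e_i$, the product $T\widetilde v_i$ is a linear combination of $e_{i-1},e_i,e_{i+1}$ with coefficients $\widetilde{\beta}_i,\widetilde{\alpha}_i,\widetilde{\beta}_{i+1}$, so lines 5--10 of Algorithm~\ref{alg:lanczos} read off $\alpha_i=\widetilde{\alpha}_i$, $\beta_{i+1}=\widetilde{\beta}_{i+1}>0$, and $\widetilde v_{i+1}=e_{i+1}$. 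This also gives $T_n=T$.

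For $\Rightarrow$ I would collect the Lanczos vectors into $V_n=[v_1,\dots,v_n]$. The hypothesis that each $v_i = \pm e_{j_i}$, together with the orthonormality guaranteed by $d=n$, forces $(j_1,\dots,j_n)$ to be a permutation, and hence $V_n$ is itself a signed permutation matrix; this $V_n$ will play the role of $P$. The truncated Lanczos identity $AV_n = V_n T_n$, whose residual vanishes because $d=n$, then yields $A = V_n T_n V_n^T$, while $v = \beta_1 v_1$ gives $v = \|v\|\,V_n e_1$. Setting $T := T_n$ closes that direction, with $\widetilde{\beta}_j = \beta_j > 0$ supplied by the algorithm itself.

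The proof is essentially bookkeeping; the only subtle point is sign tracking. Because line 10 fixes a definite sign of $v_{i+1}$ through the convention $\beta_{i+1}>0$, while the given $v$ may be a \emph{negative} multiple of some $e_j$, the first column of $P$ may have to be $-e_{j_1}$. This is exactly why the statement is phrased in terms of signed permutations rather than ordinary permutations, and once that convention is in place I expect no further obstacle.
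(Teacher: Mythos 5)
Your proposal is correct, and the forward direction ($\Rightarrow$) is essentially identical to the paper's: both identify $V_n$ with a signed permutation matrix $P$, use $AV_n=V_nT_n$ (residual vanishing since $d=n$) to get $A=PT_nP^T$, and set $T:=T_n$, with positivity of the $\beta_j$ supplied by the algorithm. The reverse direction, however, takes a genuinely different route. The paper argues non-constructively: it writes down the exact Lanczos relation $AV_n=V_nT_n$, notes that the hypothesis gives $AP=PT$, and then invokes the uniqueness of the Lanczos (Jacobi) factorization given the first column --- an implicit-$Q$-theorem argument --- to conclude $V_n=P$ and $T_n=T$ in one stroke. You instead reduce to the case $(T,\widetilde\beta_1 e_1)$ via covariance of the recurrence under orthogonal similarity and then run an explicit induction through lines 5--10 of Algorithm~\ref{alg:lanczos}, reading off $\alpha_i=\widetilde\alpha_i$, $\beta_{i+1}=\widetilde\beta_{i+1}$, $v_{i+1}=e_{i+1}$ directly. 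Your version is more elementary and self-contained (it does not appeal to the uniqueness result), and it has the added virtue of being precisely the skeleton that the paper later fleshes out in floating point arithmetic for Theorem~\ref{thm:main}; the paper's version is shorter but leans on a standard fact it does not prove. Your closing remark on sign tracking is also apt and consistent with the paper's reason for using signed permutations.
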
\smallskip

\begin{proof}
Suppose first that the Lanczos vectors are equal to plus or minus columns
of the identity matrix and that $d=n$, i.e., there is a signed permutation
matrix $P$ such that $V_{n}=P.$ Since $d=n$, we
obtain in the last iteration of the Lanczos algorithm $AV_{n}=V_{n}T_{n}$
so that 
\[
A=V_{n}T_{n}V_{n}^{T}=PTP^{T},
\]
where we set $T = T_n$. Moreover,
$v_{1}=Pe_{1}$, and, therefore, the starting vector $v$ has
to have the form $v=\widetilde{\beta}_{1}Pe_{1}$ for some $\widetilde{\beta}_{1}>0.$

On the other hand, suppose that $A=PTP^{T}$ and $v=\widetilde{\beta}_{1}Pe_{1}$
for some signed permutation matrix~$P$ and $\widetilde{\beta}_{1}>0$. Applying the Lanczos algorithm to $A$ and $v$, we get
\begin{equation}\label{eq:aux1}
AV_n=V_{n}T_{n}    .
\end{equation}
The choice of $v$ ensures that the first column $v_1$ of $V_n$ is equal to the first column $p_1$ of $P$. Moreover, from the assumption on the structure of $A$ it follows 
\begin{equation}\label{eq:aux2}
AP=PT.
\end{equation}
Comparing \eqref{eq:aux1} and \eqref{eq:aux2} and using the fact that 
$p_1=v_1$, $P$ is orthogonal, and $T$ is Jacobi with positive off-diagonal entries, we obtain 
$T_{n} = T$ and $V_{n} = P$.  
\end{proof}\smallskip

In the following theorem we show that the structure of $A$ and
$v$ introduced in Lemma~\ref{lem:equiv} is sufficient for the Lanczos
algorithm to compute exactly in the standard floating point arithmetic. \smallskip

\begin{theorem}\label{thm:main}
Consider the standard model of floating point arithmetic.
Let 
\begin{equation}
A=PTP^{T},\qquad v=\widetilde{\beta}_{1}Pe_{1},\label{eq:structure}
\end{equation}
where $P\in\mathbb{F}^{n\times n}$ is a signed permutation matrix, and $T\in\mathbb{F}^{n\times n}$
is tridiagonal of the form 
\[
T=\left[\begin{array}{cccc}
\widetilde{\alpha}_{1} & \widetilde{\beta}_{2}\\
\widetilde{\beta}_{2} & \ddots & \ddots\\
 & \ddots & \ddots & \widetilde{\beta}_{n}\\
 &  & \widetilde{\beta}_{n} & \widetilde{\alpha}_{n}
\end{array}\right],
\]
with $\widetilde{\beta}_{j}>0$
and $\widetilde{\beta}_{j}^{2}$
within the exponent range.
%$\mathtt{realmin}\leq \widetilde{\beta}_{j}^{2}\leq\mathtt{realmax}$, $j=1,\dots,n$.
Then Algorithm~\ref{alg:lanczos}
applied to $A$ and $v$ computes exactly, i.e., no rounding errors appear
%are produced
during the computations. As a consequence, it holds that $T_n = T$.
\end{theorem}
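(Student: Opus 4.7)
The plan is to prove by induction on the loop index $i$ that in floating point Algorithm~\ref{alg:lanczos} produces exactly the same iterates as in exact arithmetic, maintaining the invariants $v_i = p_i$, $\beta_i = \widetilde{\beta}_i$, and $\alpha_i = \widetilde{\alpha}_i$, where $p_j$ denotes the $j$th column of $P$. The idea is that, because each $p_j$ is a signed unit vector, every arithmetic operation inside a loop iteration reduces either to one of the exact cases listed at the beginning of Section~\ref{sec:Exact} (multiplication or division by $\pm 1$, multiplication by $0$, subtraction of equal numbers) or to the Euclidean norm of a vector with a single nonzero entry, which is covered by Lemma~\ref{lemmatko}. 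The induction therefore collapses the whole rounding-error analysis to checking that each line of the loop falls into one of these exact cases.

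For the base case I would note that $v = \widetilde{\beta}_1 Pe_1$ has one nonzero entry $\pm\widetilde{\beta}_1$, so the squared-norm sum contains only $\widetilde{\beta}_1^{2}$ (inside the exponent range by hypothesis) together with zeros; Lemma~\ref{lemmatko} then yields $\beta_1 = \widetilde{\beta}_1$, and the normalization is exact since it only involves $0/\widetilde{\beta}_1$ and $(\pm\widetilde{\beta}_1)/\widetilde{\beta}_1$. For the inductive step I would exploit the identity $AP=PT$, which on the $i$th column reads $Ap_i = \widetilde{\beta}_i p_{i-1} + \widetilde{\alpha}_i p_i + \widetilde{\beta}_{i+1} p_{i+1}$. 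Since $v_i = p_i$ has a single $\pm 1$ entry, the product $\mathrm{fl}(Av_i)$ requires no actual summation: each of its coordinates is a single product of an entry of $A$ with $\pm 1$ or $0$ and is therefore exact. Likewise $\mathrm{fl}(\beta_i v_{i-1}) = \widetilde{\beta}_i p_{i-1}$. In line~5 the two terms have overlapping support only at the nonzero position of $p_{i-1}$, where $AP=PT$ forces the two floating-point numbers to be identical and they cancel to exact zero, so $w = \widetilde{\alpha}_i p_i + \widetilde{\beta}_{i+1} p_{i+1}$. Line~6 collapses to one multiplication by $\pm 1$ and gives $\alpha_i = \widetilde{\alpha}_i$; line~7 is another exact cancellation, leaving $z = \widetilde{\beta}_{i+1} p_{i+1}$; line~8 is handled by Lemma~\ref{lemmatko}; and line~10 is exact as in the base case.

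The main obstacle is the sign bookkeeping in the cancellation of line~5. Writing $p_j = s_j e_{\sigma(j)}$ with $s_j = \pm 1$, one has to verify that $A_{\sigma(i-1),\sigma(i)} = s_{i-1} s_i \widetilde{\beta}_i$, so that the corresponding entry of $\mathrm{fl}(Av_i)$ equals $s_{i-1}\widetilde{\beta}_i$ and matches $(\beta_i v_{i-1})_{\sigma(i-1)}$ exactly. This is a direct consequence of $AP=PT$ evaluated coordinatewise and settles the only nontrivial point in the argument. The equality $T_n = T$ then follows from Lemma~\ref{lem:equiv}, since the floating-point trajectory coincides with the exact one.
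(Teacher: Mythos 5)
Your proposal is correct and follows essentially the same route as the paper: an induction over the loop index in which every operation reduces either to one of the exactly-computed cases of Section~\ref{sec:Exact} or to the norm of a single-nonzero-entry vector handled by Lemma~\ref{lemmatko}. The only cosmetic difference is that you track the signs coordinatewise via $p_j=s_j e_{\sigma(j)}$, whereas the paper factors the whole computation through $P$ (using $\mathrm{fl}(Pv)=Pv$) and performs the cancellation on $\mathrm{fl}(Te_i-\beta_i e_{i-1})$ directly.
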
 \smallskip
\begin{proof}
The proof is by induction.
Let us denote by bar the results of the computations in floating point arithmetic.
We start on lines 2 and 3 of Algorithm~\ref{alg:lanczos}.
It is easy to check that $\bar{v} = \mathrm{fl}(\widetilde{\beta}_{1}Pe_{1}) = v$, 
$\bar{v}_0 = 0 = v_0$, 
$\bar{\beta}_1 = \mathrm{fl}(\|v\|) = \widetilde{\beta}_{1} = \beta_1$,
and $\bar{v}_1 = \mathrm{fl}(v/\beta_1) = v_1$
are computed exactly.

Define the vector $e_0 = 0$ and assume that for $1 \leq i \leq n-1$ the vectors 
$
v_{j}=Pe_{j}
$, $j=0,\dots,i$,
the coefficients $\alpha_{j} = \widetilde{\alpha}_{j}$,
$j=1,\dots,i-1$,
and 
$\beta_{j}=\widetilde{\beta}_{j}$,
$j=1,\dots,i$
are computed exactly. 
Using results of Section~\ref{sec:Exact},
the induction hypothesis, 
and 
observing that $\bar{w} = \mathrm{fl}(\mathrm{fl}(Av_{i})-\mathrm{fl}(\beta_{i}v_{i-1}))=P\mathrm{fl}(Te_{i}-\beta_{i}e_{i-1})$, 
we obtain on line 5
\begin{eqnarray*}
\bar{w}=
P\mathrm{fl}\left(
\widetilde{\beta}_i e_{i-1} + 
\widetilde{\alpha}_{i} e_{i}
+
\widetilde{\beta}_{i+1} e_{i+1}
- \beta_{i}e_{i-1}\right)
=
P\left(
\widetilde{\alpha}_{i} e_{i}
+
\widetilde{\beta}_{i+1} e_{i+1}
\right)
=w.
\end{eqnarray*}
Further, on line 6 we get
\[
\bar{\alpha}_{i}=\mathrm{fl}(w^{T}v_{i})=\widetilde{\alpha}_{i}=\alpha_{i},
\]
and, using $\mathrm{fl}(\alpha_{i}v_{i})=\alpha_{i}Pe_{i}$, on line 7
\[
\bar{z}=\mathrm{fl}(w-\mathrm{fl}(\alpha_{i}v_{i}))
=
P\mathrm{fl}\left(
\alpha_{i} e_{i}
+
\widetilde{\beta}_{i+1} e_{i+1}
- \alpha_{i}e_{i}
\right)
=
\widetilde{\beta}_{i+1}Pe_{i+1}=z.
\]
Hence, $z=\bar{z}$ on line 8 is of the form \eqref{eq:special},
and
\[
\bar{\beta}_{i+1}=\mathrm{fl}(\|z\|)=\widetilde{\beta}_{i+1} = \beta_{i+1}
\]
resulting on line 10 to $\bar{v}_{i+1}=\mathrm{fl}(z/\beta_{i+1})=Pe_{i+1}=v_{i+1}$. 
\end{proof}\smallskip

Note that the same results can be shown also for the classical Gram-Schmidt
variant of Algorithm~\ref{alg:lanczos}, where we first compute $\alpha_{k}$
as $\alpha_{k}=v_{k}^{T}Av_{k}$, and then evaluate 
\[
z=Av_{k}-\alpha_{k}v_{k}-\beta_{k}v_{k-1}.
\]

The results of Theorem~\ref{thm:main} together with Lemma~\ref{lem:equiv}
indicate that the only nonzero structure of $A$ and $v$ that guarantees exact computations of the Lanczos algorithm
in floating point arithmetic is given by \eqref{eq:structure}. If
$A$ and $v$ do not have special structure \eqref{eq:structure},
the Lanczos algorithm can still compute exactly, but only in very
special cases where the particular input data are chosen such that
no rounding errors appear.

Theorem~\ref{thm:main} and Lemma~\ref{lem:equiv} can be analogously formulated for $A$ and $v$ with
$d < n$. In such case, instead of $P$ and $T$ we consider block diagonal matrices $\widetilde{P}$ and $\widetilde{T}$ of the form
\[
\widetilde{P}=\left[\begin{array}{cc}
P & 0\\
0 & R_1
\end{array}\right], \qquad
\widetilde{T}=\left[\begin{array}{cc}
T & 0\\
0 & R_2
\end{array}\right],
\]                 
where $P$ is a signed permutation matrix of size $d$, $T$ is a $d$ by $d$ tridiagonal matrix defined as in Theorem~\ref{thm:main} and Lemma~\ref{lem:equiv}, and $R_1,R_2$ are arbitrary square matrices of size $n-d$.

\section{The conjugate gradient method}
\label{sec:cg}

The results of the previous section motivate the question, whether
analogous results can be obtained also
for the conjugate gradient method that is closely related to the Lanczos
algorithm. 

Given a symmetric and positive definite (SPD) matrix $A\in\mathbb{R}^{n\times n}$
and a right-hand side vector $b\in\mathbb{R}^{n}$, we wish to solve
a system of linear algebraic equations 
\begin{align*}
Ax=b
\end{align*}
using the conjugate gradient method (CG). Consider first the classical
Hestenes and Stiefel variant of CG formulated in Algorithm~\ref{alg:cg}. 

\begin{algorithm}[h]
\caption{Conjugate gradients}
\label{alg:cg}

\begin{algorithmic}[0]

\STATE \textbf{input} $A$, $b$, $x_{0}$

\STATE $r_{0}=b-Ax_{0}$

\STATE $p_{0}=r_{0}$

\FOR{$k=1,\dots$ until convergence}

\STATE $\gamma_{k-1}=\frac{r_{k-1}^{T}r_{k-1}}{p_{k-1}^{T}Ap_{k-1}}$

\STATE $x_{k}=x_{k-1}+\gamma_{k-1}p_{k-1}$

\STATE $r_{k}=r_{k-1}-\gamma_{k-1}Ap_{k-1}$

\STATE $\delta_{k}=\frac{r_{k}^{T}r_{k}}{r_{k-1}^{T}r_{k-1}}$

\STATE $p_{k}=r_{k}+\delta_{k}p_{k-1}$

\ENDFOR

\end{algorithmic} 
\end{algorithm}

It is well-known that the vectors and the coefficients generated by CG and the Lanczos algorithm are closely related. In particular, if Algorithm~\ref{alg:lanczos}
is started with $A$ and $v=r_{0}$, then, in exact arithmetic,
%the
%Lanczos vectors determined by \Cref{alg:lanczos} and the residual
%vectors generated by \Cref{alg:cg} are related via 
\begin{equation}
v_{j+1}=(-1)^{j}\frac{r_{j}}{\|r_{j}\|}\,,\qquad j=0,\dots,k.\label{eq:CGLA}
\end{equation}

Let us recall that for $A$ and $v$ having the structure \eqref{eq:structure},
the Lanczos vectors $v_{j+1}$ are computed without any roundoff error, i.e,
they remain exactly orthogonal during finite precision
computations. Based on the relation \eqref{eq:CGLA} one could expect that the normalized CG residual vectors, computed by Algorithm~\ref{alg:cg} started with the same input data, will also be close to orthogonal. We now perform a numerical experiment showing that the orthogonality among CG residuals can be lost in general.

We consider the Strako\v{s} matrix \cite{St1991}, which is a diagonal matrix $\Lambda$ having the eigenvalues
\begin{equation}
\lambda_{i}=\lambda_{1}+\frac{i-1}{n-1}(\lambda_{n}-\lambda_{1})\rho^{n-i},\quad i=2,\ldots,n.\label{eq:strakos}
\end{equation}
In particular, we choose $n=24$, $\lambda_1= 10^{-3}$, $\lambda_n = 1$, $\rho = 0.7$ and define 
$v=[1,\ldots,1]^{T}$. To ensure that the results will closely approximate the results of
exact computations, we apply the Lanczos algorithm with double reorthogonalization to $\Lambda$ and $v$.
In the last iteration we obtain the
symmetric tridiagonal matrix $\bar{T}_{n}$ having (almost) the same spectrum as
$\Lambda$.

Define $x_{0}\equiv0$, $A\equiv \bar{T}_{n}$ and $b\equiv e_{1}$ so that 
the input data $A$ and $b$ for the CG algorithm have the desired structure \eqref{eq:structure}.
Theorem~\ref{thm:main} ensures
that Algorithm~\ref{alg:lanczos} applied to $A$ and $b$ computes
exactly. 
However, Figure~\ref{fig:2} demonstrates that this is no more true for Algorithm~\ref{alg:cg}.

In Figure~\ref{fig:2} we plot the loss of orthogonality among the
normalized residual vectors
\[
\widetilde{v}_{j+1}\equiv(-1)^{j}\frac{\bar{r}_{j}}{\|\bar{r}_{j}\|}\,,\qquad j=0,\dots,k,
\]
computed by Algorithm~\ref{alg:cg}. The loss of orthogonality
is measured using the quantity
\[
\left\Vert \widetilde{V}_{k}^{T}\widetilde{V}_{k}-I\right\Vert _{F},
\]
where $\widetilde{V}_{k} = [\widetilde{v}_1, \ldots, \widetilde{v}_k]$.
We observe that the orthogonality is lost quickly. As a consequence, the Hestenes and Stiefel version of CG (Algorithm~\ref{alg:cg}) does not compute exactly and rounding errors influence significantly the performance of the algorithm. 
\begin{figure}
\begin{centering}
\includegraphics[width=0.6\textwidth]{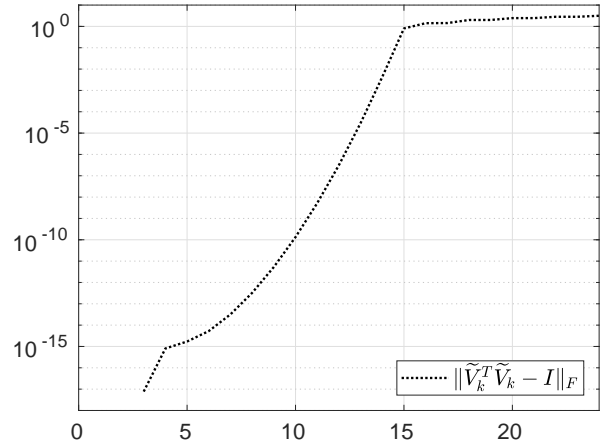} 
\par\end{centering}
\caption{Loss of orthogonality measured by $\|\widetilde{V}_{k}^{T}\widetilde{V}_{k}-I\|_{F}$
in Algorithm~\ref{alg:cg}. }
\label{fig:2} 
\end{figure}

We did not find a nonzero structure of the input data $A$ and $b$
such that Algorithm~\ref{alg:cg} computes (almost) exactly. Since the
coefficients $\gamma_{k-1}$ and $\delta_{k}$ are ratios of two floating
point numbers, it is very unlikely that such a structure exists. Nevertheless,
we can use the knowledge about the exact computations of the Lanczos
algorithm and the close relationship between both algorithms to develop
an algorithmic version of CG that computes ``almost exactly'' for
the input data having the structure \eqref{eq:structure}. The idea
is simply to compute the exact Lanczos vectors and reconstruct the
CG quantities from the Lanczos vectors. Sometimes,
this variant of the CG method is denoted as the cgLanczos algorithm; see \cite{PaSa1975}.

By comparing the corresponding recurrences for computing the Lanczos
vectors $v_{j+1}$ (Algorithm~\ref{alg:lanczos}) and the CG residual vectors
$r_{j}$ (Algorithm~\ref{alg:cg}), and using \eqref{eq:CGLA} one can find
the relationship among the Lanczos and CG coefficients 
\begin{equation}
\beta_{k+1}=\frac{\sqrt{\delta_{k}}}{\gamma_{k-1}},\quad\alpha_{k}=\frac{1}{\gamma_{k-1}}+\frac{\delta_{k-1}}{\gamma_{k-2}},\quad\delta_{0}=0,\quad\gamma_{-1}=1.\label{eq:CGLanczos}
\end{equation}
Writing \eqref{eq:CGLanczos} in the matrix form we find out that
CG computes implicitly the $LDL^{T}$ factorization of $T_{k}$ 
\begin{equation}
T_{k}  =  \left[\begin{array}{cccc}
1\\
\ell_{1} & \ddots\\
 & \ddots & \ddots\\
 &  & \ell_{k-1} & 1
\end{array}\right]\left[\begin{array}{cccc}
d_{1}\\
 & \ddots\\
 &  & \ddots\\
 &  &  & d_{k}
\end{array}\right]\left[\begin{array}{cccc}
1 & \ell_{1}\\
 & \ddots & \ddots\\
 &  & \ddots & \ell_{k-1}\\
 &  &  & 1
\end{array}\right],\label{eq:matrixLDL}
\end{equation}
where 
\[
\ell_{j}=\sqrt{\delta_{j}},\quad j=1,\dots,k-1,\quad\mbox{and}\quad  d_{j}=\gamma_{j-1}^{-1},\quad j=1,\dots,k,
\]
are easily expressible from the CG coefficients. Therefore, knowing
$T_{k}$, we can compute its $LDL^{T}$ factorization to reconstruct
the CG coefficients. The factorization can be computed using 
\begin{equation}
d_{1}=\alpha_{1},\quad\ell_{j}=\frac{\beta_{j+1}}{d_{j}},\quad d_{j+1}=\alpha_{j+1}-\beta_{j+1}\ell_{j},\qquad j=1,\dots,k-1;\label{eq:LDL}
\end{equation}
see, e.g., \cite[p.25]{B:GoMe2010}.

Suppose now that the Lanczos vectors and coefficients are known. Assuming for simplicity $x_0=0$, we
would like to reconstruct the CG approximate solutions $x_{k}$ from
the Lanczos process. It is well-known that 
\begin{equation}
x_{k}= V_{k}y_{k},\qquad T_{k}y_{k}=\|b\|e_{1}.\label{eq:CGcomputed}
\end{equation}
In the special case of the input data having the structure \eqref{eq:structure}
one can assume that $T_{k}\in\mathbb{F}^{k\times k}$ and $\|b\|\in\mathbb{F}$
are computed exactly using Algorithm~\ref{alg:lanczos}. If we are able to compute the solution of the
system $T_{k}y_{k}=\|b\|e_{1}$ exactly, then $x_{k}= V_{k}y_{k}$
would be the exact CG approximation since columns of $V_{k}$ are just plus or
minus columns of the identity matrix. However, in general, 
the system $T_{k}y_{k}=\|b\|e_{1}$ has to be solved numerically and only the computed
solution $\bar{y}_{k}$ is available.

Using \cite[Theorem 9.14, p.~176]{B:Hi2002}, the numerical solution $\bar{y}_{k}$
of the system with tridiagonal symmetric and positive definite $T_{k}$
computed using the $LDL^{T}$ factorization of $T_{k}$ is the exact
solution of the perturbed problem
\[
\left(T_{k}+\Delta\right)\bar{y}_{k}=\|b\|e_{1},\qquad|\Delta|\leq 5\ru|T_{k}|= 5\ru T_{k}.
\]
Therefore, 
%having the input
%data with the structure \eqref{eq:structure}, we obtain
\[
y_{k} = (I + T_{k}^{-1}\Delta)\bar{y}_{k}.
\]
so that
\[
x_{k}-\bar{x}_{k}=V_{k}(y_{k}-\bar{y}_{k})
%=V_{k}T_{k}^{-1}\left(T_{k}y_{k}-T_{k}\bar{y}_{k}\right)
=V_{k}T_{k}^{-1}\Delta\bar{y}_{k}.
\]
Assuming that $5\ru\kappa(A) < 1$, we get 
$$\norm{T_{k}^{-1}\Delta} \leq 5\ru\kappa(T_{k})\leq 5\ru\kappa(A)< 1\,.
$$
Hence, 
$I + T_{k}^{-1}\Delta$ is nonsingular and
%\[
%x_{k}-\bar{x}_{k}=V_{k}T_{k}^{-1}\Delta(I + T_{k}^{-1}\Delta)^{-1}y_{k},
%\]
%and
\[
\norm{(I + T_{k}^{-1}\Delta)^{-1}} \leq \frac{1}{1 - \norm{T_{k}^{-1}\Delta}}.
\]
Finally, using $\left\Vert x_{k}\right\Vert =\left\Vert V_{k}y_{k}\right\Vert =\left\Vert y_{k}\right\Vert$ we obtain 
\begin{eqnarray*}
\frac{\left\Vert x_{k}-\bar{x}_{k}\right\Vert }{\left\Vert x_{k}\right\Vert }&=& 
\frac{\left\Vert T_{k}^{-1}\Delta \bar{y}_k\right\Vert}{\left\Vert y_{k}\right\Vert }=
\frac{\left\Vert T_{k}^{-1}\Delta(I + T_{k}^{-1}\Delta)^{-1}y_{k}\right\Vert}{\left\Vert y_{k}\right\Vert }\\
&\leq&\frac{5\ru\kappa(T_{k})}{1-5\ru\kappa(T_{k})}\leq\frac{5\ru\kappa(A)}{1-5\ru\kappa(A)}.
\end{eqnarray*}
The results are summarized in the following theorem.\smallskip

\begin{theorem}\label{thm:main-1}Let a symmetric and positive definite
matrix $A$ and a vector $b$ have the structure \eqref{eq:structure}.
Suppose that $V_{k}$ and $T_{k}$ are computed using the Lanczos
algorithm (Algorithm~\ref{alg:lanczos}) applied to $A$ and $b$, and
that the system $T_{k}y_{k}=\|b\|e_{1}$ is solved numerically
using $LDL^{T}$ factorization giving the computed solution $\bar{y}_{k}$.
Let $x_0=0$.
Then, under the assumption $5\ru\kappa(A) < 1$, the computed CG approximate solution $\bar{x}_{k}=V_{k}\bar{y}_{k}$, $k>0$, 
satisfies 
\begin{equation}
\frac{\left\Vert x_{k}-\bar{x}_{k}\right\Vert }{\left\Vert x_{k}\right\Vert }\leq\frac{5\ru\kappa(A)}{1-5\ru\kappa(A)},
\label{eq:CGexact}
\end{equation}
where $x_{k}$ is the exact CG approximation.
\end{theorem}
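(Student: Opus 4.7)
The plan is to combine the exactness guarantee from Theorem~\ref{thm:main} with a standard backward error bound for the tridiagonal $LDL^{T}$ solve. First I would invoke Theorem~\ref{thm:main}: because $A$ and $b$ have the structure \eqref{eq:structure}, Algorithm~\ref{alg:lanczos} produces $V_{k}$ and $T_{k}$ with no rounding error, so both are available in their exact form, and moreover the columns of $V_{k}$ are $\pm e_{j}$ for distinct indices~$j$. In particular $V_{k}$ is an isometry: $\|V_{k}z\|=\|z\|$ for every $z\in\mathbb{R}^{k}$. This observation is the lever that converts an error estimate on $y_{k}-\bar{y}_{k}$ into the same estimate on $x_{k}-\bar{x}_{k}=V_{k}(y_{k}-\bar{y}_{k})$.

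Next I would apply the backward-error result \cite[Theorem 9.14]{B:Hi2002} to the computed solution $\bar{y}_{k}$ of the SPD tridiagonal system $T_{k}y_{k}=\|b\|e_{1}$: there is a perturbation $\Delta$ with $|\Delta|\leq 5\ru\, T_{k}$ (hence $\|\Delta\|\leq 5\ru\|T_{k}\|$) such that $(T_{k}+\Delta)\bar{y}_{k}=\|b\|e_{1}$. Subtracting this from $T_{k}y_{k}=\|b\|e_{1}$ gives $T_{k}(y_{k}-\bar{y}_{k})=\Delta\bar{y}_{k}$, or equivalently $y_{k}-\bar{y}_{k}=T_{k}^{-1}\Delta\bar{y}_{k}$ and $(I+T_{k}^{-1}\Delta)\bar{y}_{k}=y_{k}$.

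The third step is a condition-number comparison. Since $A=PTP^{T}$ with $P$ a signed permutation matrix, $A$ and $T$ are orthogonally similar, so $\kappa(T)=\kappa(A)$. Because $T_{k}$ is the leading $k\times k$ principal submatrix of the SPD matrix $T$, Cauchy interlacing yields $\lambda_{\max}(T_{k})\leq\lambda_{\max}(T)$ and $\lambda_{\min}(T_{k})\geq\lambda_{\min}(T)$, hence $\kappa(T_{k})\leq\kappa(A)$ and $\|T_{k}^{-1}\Delta\|\leq 5\ru\,\kappa(A)<1$ by assumption. A Neumann-series argument then gives $\|(I+T_{k}^{-1}\Delta)^{-1}\|\leq(1-5\ru\,\kappa(A))^{-1}$.

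Finally I would assemble the estimate. Using the isometry of $V_{k}$ we have $\|x_{k}\|=\|y_{k}\|$, and
\[
\|x_{k}-\bar{x}_{k}\|=\|V_{k}T_{k}^{-1}\Delta\bar{y}_{k}\|=\|T_{k}^{-1}\Delta(I+T_{k}^{-1}\Delta)^{-1}y_{k}\|,
\]
from which \eqref{eq:CGexact} follows immediately on dividing by $\|y_{k}\|$. The main obstacle, if any, is the interlacing step: one must realize that although $T_{k}$ may be arbitrarily ill-conditioned in general, here $\kappa(T_{k})\leq\kappa(A)$ uniformly in $k$ thanks to the structural identity $A=PTP^{T}$ with $T_{n}=T$. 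Everything else is routine bookkeeping, with Theorem~\ref{thm:main} (through the isometry of $V_{k}$) doing the critical work of eliminating the Krylov basis from the final bound.
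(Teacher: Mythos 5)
Your proof is correct and follows essentially the same route as the paper: exactness of $V_{k}$ and $T_{k}$ from Theorem~\ref{thm:main}, the backward error bound $(T_{k}+\Delta)\bar{y}_{k}=\|b\|e_{1}$ with $|\Delta|\leq 5\ru\,T_{k}$ from Higham, the Neumann-series bound on $(I+T_{k}^{-1}\Delta)^{-1}$, and the isometry $\|x_{k}\|=\|y_{k}\|$. The only difference is that you explicitly justify $\kappa(T_{k})\leq\kappa(A)$ via Cauchy interlacing, a step the paper states without comment.
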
 \smallskip

The above results demonstrate that almost exact CG approximate solutions
can be computed without reorthogonalization. Naturally, the above
mentioned version of CG is not too efficient since it requires storing
the Lanczos vectors $V_{k}$ and the matrix $T_{k}$. Below we derive
a more efficient version of CG that preserves the above idea: first
compute the Lanczos vectors and coefficients and then reconstruct
the CG related quantities. Using

\[
\|r_{k}\|=\sqrt{\delta_{k}\delta_{k-1}\dots\delta_{1}}\|r_{0}\|=\ell_{1}\dots\ell_{k}\|r_{0}\|
\]
we obtain 
\begin{eqnarray}
r_{k} & = & (-1)^{k}\|r_{k}\|\,v_{k+1}=(-1)^{k}\|r_{0}\|\ell_{1}\dots\ell_{k}\,v_{k+1},\label{eq:CGr}\\
p_{k} & = & r_{k}+\delta_{k}p_{k-1}=r_{k}+\ell_{k}^{2}p_{k-1},\\
x_{k} & = & x_{k-1}+\gamma_{k-1}p_{k-1}=x_{k-1}+\frac{p_{k-1}}{d_{k}}.\label{eq:CGx}
\end{eqnarray}
The final cgLanczos algorithm is given by Algorithm~\ref{alg:lanczos-cg}.
For simplicity we choose $x_{0}=0$ so that $r_{0}=b$. 
Note that the cgLanczos algorithm follows 
 in a straightforward way from the results of
\cite[Section~4]{PaSa1975}.
\begin{algorithm}[ht]
\caption{cgLanczos algorithm}

\label{alg:lanczos-cg}

\begin{algorithmic}[0]

\STATE \textbf{input} $A$, $b$

\STATE $\beta_{1}=0$, $v_{0}=0$, $\ell_{0}=0$, $x_{0}=0$

\STATE $r_{0}=b$, $p_{0}=r_{0}$,

\STATE $\rho_{0}=\|b\|$

\STATE $v_{1}=b/\rho_{0}$

\FOR{$k=1,\dots$}

\STATE $w=Av_{k}-\beta_{k}v_{k-1}$

\STATE $\alpha_{k}=w^{T}v_{k}$

\STATE $w=w-\alpha_{k}v_{k}$\hspace*{4em}\rlap{\smash{$\left.\begin{array}{@{}c@{}}
\\
{}\\
{}\\
{}
\end{array}\right\} \begin{tabular}{l}
 \ensuremath{T_{k}} and \ensuremath{V_{k}}\end{tabular}$}}

\STATE $\beta_{k+1}=\|w\|$

\STATE $v_{k+1}=w/\beta_{k+1}$

\STATE $d_{k}=\alpha_{k}-\beta_{k}\ell_{k-1}$

\STATE $\ell_{k}=\frac{\beta_{k+1}}{d_{k}}$\hspace*{6em}\rlap{\smash{$\left.\begin{array}{@{}c@{}}
\\
{}
\end{array}\right\} \begin{tabular}{l}
 \ensuremath{T_{k}=L_{k}D_{k}L_{k}^{T}}\end{tabular}$}}

\STATE $\rho_{k}=\ell_{k}\rho_{k-1}$

\STATE $x_{k}=x_{k-1}+\frac{p_{k-1}}{d_{k}}$

\STATE $r_{k}=(-1)^{k}\rho_{k}v_{k+1}$\hspace*{2.5em}\rlap{\smash{$\left.\begin{array}{@{}c@{}}
\\
{}
\end{array}\right\} \begin{tabular}{l}
 vectors \ensuremath{x_{k}}, \ensuremath{r_{k}} and \ensuremath{p_{k}}\end{tabular}$}}

\STATE $p_{k}=r_{k}+\ell_{k}^{2}p_{k-1}$

\ENDFOR

\end{algorithmic} 
\end{algorithm}

The Algorithm~\ref{alg:lanczos-cg} has three parts marked out by brackets.
First, the Lanczos vectors and coefficients are computed as in Algorithm~\ref{alg:lanczos}.
In the second part the algorithm computes the $LDL^{T}$ factorization
via \eqref{eq:LDL} and the last part computes the CG vectors $p_{j}$,
$r_{j}$ and $x_{j}$ using \eqref{eq:CGr}-\eqref{eq:CGx}. We can
see immediately that if we apply Algorithm~\ref{alg:lanczos-cg} to $A$
and $b$ having the structure \eqref{eq:structure}, the residual
vectors are exactly orthogonal during finite precision computations
as in the case of Algorithm~\ref{alg:lanczos}. The computed coefficients $\bar{\ell}_{j}$
and $\bar{d}_{j}$ are almost exact in the sense
\[
T_{k}+\Delta=\bar{L}_{k}\bar{D}_{k}\bar{L}_{k}^{T},\qquad|\Delta|\leq5\ru T_{k},
\]
see \cite[p.~174]{B:Hi2002}, where $\bar{L}_{k}$ and $\bar{D}_{k}$
are the computed factors of the $LDL^{T}$ factorization of $T_{k}$.
Therefore, one can expect that the CG approximate solution $\bar{x}_{k}$ computed
using Algorithm~\ref{alg:lanczos-cg} will satisfy the relation \eqref{eq:CGexact}.

For numerical demonstration we consider the same problem as at the beginning
of this section, i.e., we consider $A$ and $b$ having the structure \eqref{eq:structure},
that have been obtained from the Lanczos algorithm with double reorthogonalization applied to $\Lambda$ and $v$. However, instead 
of Hestenes and Stiefel version of CG (Algorithm~\ref{alg:cg})
we apply the cgLanczos algorithm (Algorithm~\ref{alg:lanczos-cg})
to solve the system $Ax=b$ with $x_0=0$. It is clear that residuals must be exactly orthogonal. 
Hence, we measure the quality of results computed 
by Algorithm~\ref{alg:lanczos-cg} using the $A$-orthogonality
of the reconstructed direction vectors, and using the relative distance between the exact and the computed CG approximations. 
\begin{figure}
\begin{centering}
\includegraphics[width=0.6\textwidth]{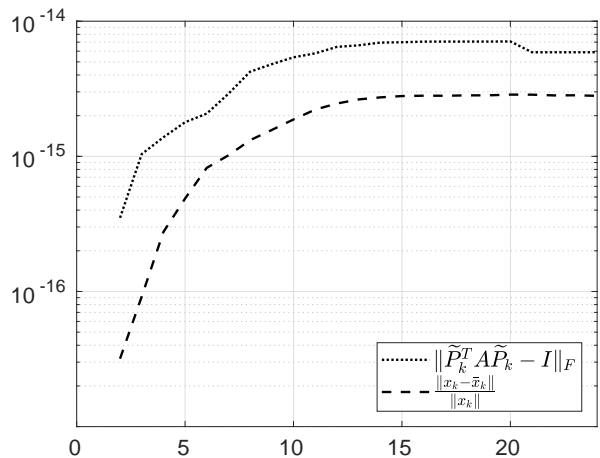} 
\par\end{centering}
\caption{The loss of $A$-orthogonality among direction vectors computed by Algorithm~\ref{alg:lanczos-cg}. }
\label{fig:3} 
\end{figure}

In Figure~\ref{fig:3} we plot the loss of $A$-orthogonality (dotted curve) among the normalized direction
vectors 
$$
\widetilde{p}_{k} = \frac{\bar{p}_{k}}{\Vert \bar{p}_{k} \Vert_A}
$$
computed by Algorithm~\ref{alg:lanczos-cg}. The loss of
$A$-orthogonality is measured by the Frobenius norm of the matrix
$\widetilde{P}_{k}^{T}A\widetilde{P}_{k}-I$,
%\[
%\norm{\hat{P}_{k}^{T}A\hat{P}_{k}-I}_{F}
%\]
where $\widetilde{P}_{k}=[\widetilde{p}_{0},\dots,\widetilde{p}_{k-1}]$. As expected,
the loss of $A$-orthogonality is close to the machine
precision level. Moreover, we also plot the quantity
$$
\frac{\left\Vert x_{k}-\bar{x}_{k}\right\Vert }{\left\Vert {x}_{k}\right\Vert }
$$
(dashed curve), where $\bar{x}_{k}$ were computed in double precision using Algorithm~\ref{alg:lanczos-cg} and the exact approximations $x_{k}$ were computed using Algorithm~\ref{alg:lanczos-cg} in extended precision arithmetic with 128 valid digits (Matlab's vpa arithmetic). As expected and predicted by 
Theorem~\ref{thm:main-1}, the relative error is close to the machine precision level. Note that $\kappa(A)=10^3$.

\section{Other algorithms}
\label{sec:other}
In Section~\ref{sec:flan} we parametrized matrices $A$ and starting vectors $v$ that guarantee exact computations of the Lanczos algorithm.
In this section we demonstrate that the ideas of 
Section~\ref{sec:flan} can be generalized to other algorithms
for computing bases of Krylov subspaces. In particular, if $A$ is not symmetric, we can use the Arnoldi algorithm \cite{Ar1952} for computing the orthonormal basis, 
or the nonsymmetric Lanczos algorithm \cite{La1950}
for computing the bi-orthogonal basis. When working with Krylov subspaces generated by symmetric matrices $A^TA$ or $AA^T$, one can consider the Golub-Kahan bidiagonalization \cite{GoKa1965}. The ideas can be further generalized to block Krylov subspaces method like the block-Lanczos \cite{GoUn1977} or block-Arnoldi algorithms. 
We will show that there exists a nonzero structure of the input data that guarantees exact computations of the above mentioned algorithms.
For each algorithm we define the index $d$ that corresponds to the maximal dimension of the corresponding subspaces, and formulate the
final results for $d=n$. Nevertheless, all results can be generalized  to the case $d < n$ similarly as for the Lanczos algorithm; see Section~\ref{sec:flan}. 

\subsection{Arnoldi algorithm}
\label{subsec:Arnoldi}
A natural generalization of the Lanczos algorithm for nonsymmetric matrices is the Arnoldi algorithm; see \cite{Ar1952}. Given a square matrix $A\in\mathbb{R}^{n \times n}$ and assuming $k<d=d(A,v)$, the Arnoldi algorithm (Algorithm~\ref{alg:Arnoldi}) computes an orthonormal basis $v_1,\ldots,v_{k+1}$ of the Krylov subspace $\mathcal{K}_{k+1}(A,v)$.
\begin{algorithm}[!ht]
\caption{Arnoldi algorithm} 
\label{alg:Arnoldi}
\begin{algorithmic}[0]

\STATE \textbf{input} $A$, $v$

\STATE $ v_1 = v/\norm{v} $

\FOR{$j=1,\dots, k$}

\STATE $w = A v_j$

\FOR{$i = 1 : j$}

\STATE $h_{i,j} = v_i^T w$

\STATE $w = w - h_{i,j} v_i$

\ENDFOR

\STATE $h_{j+1,j} = \norm{w}$

\STATE $v_{j+1} = \frac{w}{h_{j+1,j}}$

\ENDFOR
\end{algorithmic}
\end{algorithm} 
The computed vectors and coefficients satisfy
$$
AV_k = V_kH_k + h_{k+1,k}v_{k+1}e_k^T,
$$
where $V_k = [v_1, \dots, v_k]$ and
$$
H_k=\left[\begin{array}{cccc}
h_{1,1} & \ldots & \ldots & h_{1,k} \\
h_{2,1} & \ddots & & \vdots \\
 & \ddots & \ddots & \vdots  \\
 &  & h_{k,k-1} & h_{k,k}
\end{array}\right]
$$
is upper Hessenberg 
with $h_{j+1,j} > 0, j = 1,\hdots,k-1$. Note that if $A$ symmetric, then $H_k$ is symmetric and tridiagonal, and 
Algorithm~\ref{alg:Arnoldi} is equivalent to Algorithm~\ref{alg:lanczos}.

% The following lemma is generalization of Lemma~\ref{lem:equiv} for the Arnoldi algorithm.

% \begin{lemma}
% Assuming exact arithmetic, Algorithm~\ref{alg:Arnoldi} applied to symmetric $A\in\mathbb{R}^{n \times n}$ and $v\in\mathbb{R}^n$ such that $d=n$ produces sequence of vectors $v_1,v_2,\ldots$ equal to plus or minus columns of identity matrix if and only if
% $$
% A = PHP^T,\qquad v = hPe_1,
% $$
% with $P\in\mathbb{R}^{n \times n}$ being a signed permutation matrix and $H\in\mathbb{R}^{n \times n}$ being an upper Hessenberg matrix of the form
% $$
% H=\left[\begin{array}{cccc}
% \widetilde{h}_{1,1} & \ldots & \ldots & \widetilde{h}_{1,n} \\
% \widetilde{h}_{2,1} & \ddots & & \vdots \\
%  & \ddots & \ddots & \vdots  \\
%  &  & \widetilde{h}_{n,n-1} & \widetilde{h}_{n,n}
% \end{array}\right], 
% $$ 
% where $\widetilde{h}_{j+1,j},h > 0$, $j = 1,\ldots,n-1$. Moreover, the upper Hessenberg matrix $H_{n}$ \cred{resulting from Algorithm~\ref{alg:Arnoldi}} is equal to $H$.
% \label{lemma-arnoldi}
% \end{lemma}
Theorem~\ref{thm:main} for the Lanczos algorithm can now be generalized in a straightforward way for the Arnoldi algorithm. We state the corresponding theorem without a proof.
\smallskip

\begin{theorem}
Consider the standard model of floating point arithmetic. Let
$$
A = PHP^T, \qquad v=\widetilde{h}_{1,0}Pe_1,
$$
where $P\in\mathbb{F}^{n \times n}$ is a signed permutation matrix and
$$
H=\left[\begin{array}{cccc}
\widetilde{h}_{1,1} & \ldots & \ldots & \widetilde{h}_{1,n} \\
\widetilde{h}_{2,1} & \ddots & & \vdots \\
 & \ddots & \ddots & \vdots  \\
 &  & \widetilde{h}_{n,n-1} & \widetilde{h}_{n,n}
\end{array}\right] 
$$ 
with $\widetilde{h}_{j+1,j}>0$ and $\widetilde{h}^2_{j+1,j}$ within the exponent range,
%$\mathtt{realmin} \leq \widetilde{h}^2_{j+1,j}\leq \mathtt{realmax}$, 
$j = 0,\ldots,n-1$. 
Then Algorithm~\ref{alg:Arnoldi} applied to $A$ and $v$ computes exactly. As a consequence, it holds that $H_{n} = H$.
\label{thm-arnoldi}
\end{theorem}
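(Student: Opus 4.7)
The plan is to mirror the proof of Theorem~\ref{thm:main}, replacing the two-term Lanczos recurrence by the modified Gram--Schmidt loop inside the Arnoldi algorithm, and to proceed by induction on the outer index $j$.

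First I would verify the base case: since $v = \widetilde{h}_{1,0}Pe_1 = \pm\widetilde{h}_{1,0}e_{\pi(1)}$ for some index $\pi(1)$ (because $P$ is a signed permutation), the starting vector has the sparse form \eqref{eq:special}, so by Lemma~\ref{lemmatko} its Euclidean norm equals $\widetilde{h}_{1,0}$ exactly. The normalization then reduces to a division of a single nonzero entry by itself (up to sign), and $\mathrm{fl}(\alpha/\alpha)=1$ combined with $\mathrm{fl}(-\alpha)=-\alpha$ gives $\bar{v}_1 = Pe_1$ with no rounding error.

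Next I would state the inductive hypothesis: after the $(j-1)$st outer iteration, the computed vectors satisfy $\bar{v}_i = Pe_i$ for $i = 1,\dots,j$ and $\bar{h}_{i,\ell} = \widetilde{h}_{i,\ell}$ for all $\ell < j$ and admissible~$i$. Under this hypothesis, $\bar{w} = \mathrm{fl}(A\bar{v}_j) = P\,\mathrm{fl}(He_j) = \sum_{i=1}^{j+1}\widetilde{h}_{i,j}Pe_i$, because $A=PHP^{T}$ applied to $Pe_j$ is a pure signed-permutation of the $j$th column of $H$ and every scalar multiplication inside this matrix--vector product is by $0$ or by $\pm1$ (using the identities listed at the start of Section~\ref{sec:Exact}). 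The central step is then to march through the inner loop $i=1,\dots,j$ with the invariant that, upon entering iteration $i$, the current $\bar{w}$ equals $\sum_{\ell=i}^{j+1}\widetilde{h}_{\ell,j}Pe_\ell$. The inner product $\bar{h}_{i,j} = \bar{v}_i^{T}\bar{w}$ collapses to a single nonzero product $(\pm 1)(\pm\widetilde{h}_{i,j})$ since all other entries of $\bar{v}_i=Pe_i$ vanish, so $\bar{h}_{i,j}=\widetilde{h}_{i,j}$ is obtained exactly; the update $\bar{w}\leftarrow\bar{w}-\bar{h}_{i,j}\bar{v}_i$ then zeros out exactly one coordinate via $\mathrm{fl}(\alpha-\alpha)=0$ and leaves the others untouched, re-establishing the invariant at step $i+1$.

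Once the inner loop terminates, $\bar{w}=\widetilde{h}_{j+1,j}Pe_{j+1}$ has the structure \eqref{eq:special}, so Lemma~\ref{lemmatko} yields $\bar{h}_{j+1,j}=\widetilde{h}_{j+1,j}$ exactly, and the final normalization gives $\bar{v}_{j+1}=Pe_{j+1}$. The identity $H_n=H$ is then immediate from $\bar{h}_{i,j}=\widetilde{h}_{i,j}$ for all relevant indices. The main obstacle, compared with the Lanczos case, is \emph{bookkeeping across the longer inner loop}: unlike the two-term recurrence used in Theorem~\ref{thm:main}, one now has $j$ successive modified Gram--Schmidt steps per outer iteration, and one must maintain the sparse-form invariant for $\bar{w}$ through all of them and check that every single operation — matrix--vector product, inner product, axpy update, norm, and normalization — falls into the small catalogue of exact floating-point identities from Section~\ref{sec:Exact}. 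Once that invariant is written down cleanly, the verification itself is essentially the same pattern as in Theorem~\ref{thm:main}.
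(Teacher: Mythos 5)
Your proposal is correct and is precisely the ``straightforward generalization'' of the induction in the proof of Theorem~\ref{thm:main} that the paper invokes when it states Theorem~\ref{thm-arnoldi} without proof; the inner-loop invariant $\bar{w}=\sum_{\ell=i}^{j+1}\widetilde{h}_{\ell,j}Pe_\ell$ is exactly the right bookkeeping device for the longer Gram--Schmidt recurrence. No gaps.
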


%Lemma~\ref{lemma-arnoldi} and 
%Theorem~\ref{thm-arnoldi} can also be generalized for $d < n$ similarly as in the case of the Lanczos algorithm; see Section~\ref{sec:flan}.

\subsection{Nonsymmetric Lanczos algorithm}
\label{subsec:nlanczos}

Given $A\in\mathbb{R}^{n \times n}$ and $v,w\in\mathbb{R}^{n}$, such that $w^Tv \neq 0$, we denote 
\begin{equation*}
    d = \min(d(A,v),d(A^T,w)).    
\end{equation*}
Assuming $k<d$ and $\beta_{i+1}\neq 0$, $i = 1,\ldots,k$, the nonsymmetric Lanczos algorithm \cite{La1950} (Algorithm~\ref{alg:nlanczos}) computes 
\begin{algorithm}[th]
\caption{nonsymmetric Lanczos algorithm}
\label{alg:nlanczos}

\begin{algorithmic}[0]

\STATE \textbf{input} $A$, $v$,  $w$

\STATE $v_{0}=w_0 = 0$, $\gamma_1=\|v \|$, $v_{1}=v/\gamma_1$

\STATE $\beta_{1} = w^Tv_1$, $w_{1}=w/\beta_{1}$  

\FOR{$i=1,\dots,k$} 

\STATE $\alpha_{i}=w_i^{T}A v_{i}$ 

\STATE $v =  Av_{i}-\alpha_i v_i - \beta_{i}v_{i-1}$ 

\STATE $\gamma_{i+1} = \| v\|$

\STATE $v_{i+1}=v/\gamma_{i+1}$

\STATE $w = A^Tw_{i}-\alpha_i w_i - \gamma_{i}w_{i-1}$

\STATE $\beta_{i+1} = v_{i+1}^T w$ 
\STATE $w_{i+1} = w/\beta_{i+1}$ 

\ENDFOR 

\end{algorithmic} 
\end{algorithm}
two sets $v_1,\dots,v_{k+1}$ and $w_1,\dots,w_{k+1}$ of bi-orthogonal vectors. The vectors and coefficients generated by Algorithm~\ref{alg:nlanczos} satisfy
\begin{eqnarray*}
AV_k &=& V_k T_k + \gamma_{k+1} v_{k+1} e_k^T, \\
A^T W_k &=& W_k T_k^T + \beta_{k+1} w_{k+1} e_k^T, \\
W_k^T V_k &=& I,
\end{eqnarray*}
where $V_k = [v_1,\dots,v_k]\in \mathbb{R}^{n \times k}$, $W_k = [w_1,\dots,w_k]\in \mathbb{R}^{n \times k}$,
and
\begin{equation*}
T_{k}=\left[\begin{array}{cccc}
\alpha_{1} & \beta_{2}\\
\gamma_{2} & \ddots & \ddots\\
 & \ddots & \ddots & \beta_{k}\\
 &  & \gamma_{k} & \alpha_{k}
\end{array}\right].
\end{equation*}

The nonsymmetric Lanczos algorithm is based on two three-term reccurences similar to the reccurence from the Lanczos algorithm. Using the same 
technique as for the Lanczos algorithm,
we obtain 
an analogy of Theorem~\ref{thm:main}
that we present
without a proof.
%It is therefore quite natural that we can formulate 
%an analogy of the Theorem~\ref{thm:main} \cblue{also} for the nonsymmetric Lanczos algorithm. We present the following theorem without proof.
\smallskip

\begin{theorem}
Consider the standard model of floating point arithmetic. Let
$$
A = PTP^T, \quad v=\widetilde{\gamma}_1Pe_1, \quad w=\widetilde{\beta}_1Pe_1,
$$
where $P\in\mathbb{F}^{n \times n}$ is a sign permutation matrix and $T\in\mathbb{F}^{n \times n}$ is tridiagonal of the form
$$
T=\left[\begin{array}{cccc}
\widetilde{\alpha}_{1} & \widetilde{\beta}_{2}\\
\widetilde{\gamma}_{2} & \ddots & \ddots\\
 & \ddots & \ddots & \widetilde{\beta}_{n}\\
 &  & \widetilde{\gamma}_{n} & \widetilde{\alpha}_{n}
\end{array}\right], 
$$ 
with $\widetilde{\beta}_j \neq 0$, $\widetilde{\gamma}_j> 0$ and 
$\widetilde{\gamma}^2_j$ within the exponent range,
%$ \mathtt{realmin} \leq \widetilde{\gamma}^2_j\leq \mathtt{realmax}$, 
$j = 1,\ldots,n$. Then Algorithm~\ref{alg:nlanczos} applied to $A$, $v$ and $w$ computes exactly. As a consequence, it holds that $T_{n} = T$.
\label{thm-nlanczos}
\end{theorem}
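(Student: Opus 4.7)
The proof should mirror the induction structure used in Theorem~\ref{thm:main}, but now tracking two coupled sequences $\{v_i\}$ and $\{w_i\}$ instead of one. The plan is to show by induction on $i$ that both $v_i = Pe_i$ and $w_i = Pe_i$ (up to the conventional $v_0 = w_0 = 0$), and that all scalars $\alpha_i, \beta_i, \gamma_i$ are computed exactly and coincide with $\widetilde{\alpha}_i, \widetilde{\beta}_i, \widetilde{\gamma}_i$.

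For the base case, lines 2 and 3 of Algorithm~\ref{alg:nlanczos} reduce to evaluating $\|\widetilde{\gamma}_1 Pe_1\|$, a division by $\widetilde{\gamma}_1$, the inner product $w^T v_1$, and a division by $\widetilde{\beta}_1$. By Lemma~\ref{lemmatko} the norm is exact (the assumption on the exponent range is used here), multiplication by the signed permutation $P$ is exact, and $\mathrm{fl}(\alpha/\alpha)=1$, so one finds $\gamma_1 = \widetilde{\gamma}_1$, $v_1 = Pe_1$, $\beta_1 = \widetilde{\beta}_1$, and $w_1 = Pe_1$ exactly. Note that $w_1^T v_1 = 1$ is realized as a sum in which only one summand, $(\pm 1)(\pm 1)$ with matching signs, is nonzero, and is therefore exact.

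For the inductive step, assume $v_j = w_j = Pe_j$ for $j = 0,\dots,i$ and that the previously computed scalars are exact. Because $A = PTP^T$ and $v_i = Pe_i$, the product $Av_i$ equals $P T e_i$, a vector with at most three nonzero floating point entries $\widetilde{\beta}_i, \widetilde{\alpha}_i, \widetilde{\gamma}_{i+1}$ permuted and possibly sign-flipped by $P$; this is computed exactly since multiplication by $P$ is a sign-flip reshuffle. The scalar $\alpha_i = w_i^T A v_i$ is a sum of products in which exactly one summand is nonzero, yielding $\widetilde{\alpha}_i$ exactly. On line 6 the combination $Av_i - \alpha_i v_i - \beta_i v_{i-1}$ becomes $P(\widetilde{\beta}_i e_{i-1} + \widetilde{\alpha}_i e_i + \widetilde{\gamma}_{i+1}e_{i+1}) - \widetilde{\alpha}_i Pe_i - \widetilde{\beta}_i Pe_{i-1}$, whose floating point evaluation collapses to $\widetilde{\gamma}_{i+1} Pe_{i+1}$ because each cancellation is of the form $\mathrm{fl}(\alpha-\alpha)=0$. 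Lemma~\ref{lemmatko} then yields $\gamma_{i+1} = \widetilde{\gamma}_{i+1}$ exactly on line 7, and $v_{i+1} = Pe_{i+1}$ on line 8. The dual computation on lines 9--11 is entirely symmetric: using $A^T w_i = P T^T e_i$ together with the induction hypothesis, the vector $w$ collapses to $\widetilde{\beta}_{i+1} Pe_{i+1}$, the inner product $\beta_{i+1} = v_{i+1}^T w$ reduces to a single nonzero summand giving $\widetilde{\beta}_{i+1}$ exactly (regardless of sign, since the assumption is only $\widetilde{\beta}_j \neq 0$), and $w_{i+1} = Pe_{i+1}$ follows from $\mathrm{fl}(\alpha/\alpha)=1$.

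The main technical point, as in Theorem~\ref{thm:main}, is the norm computation on line 7, which is where Lemma~\ref{lemmatko} and the exponent-range hypothesis on $\widetilde{\gamma}_j^2$ are essential. Everything else is a matter of verifying that each floating point operation encountered either (i) involves a signed permutation and is therefore exact, (ii) is of the trivial exact form $\mathrm{fl}(1\!\cdot\!\alpha), \mathrm{fl}(0\!\cdot\!\alpha), \mathrm{fl}(\alpha-\alpha), \mathrm{fl}(\alpha/\alpha)$ listed in Section~\ref{sec:Exact}, or (iii) is an inner product in which only one summand is nonzero. No additional obstacle arises from $\widetilde{\beta}_j$ being allowed to have either sign, since $\beta_{i+1}$ is produced by an inner product rather than a norm. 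Once the induction closes, the identities $AV_n = V_n T_n$ and $V_n = P$ (together with the analogous relation for $W_n$) yield $T_n = T$, completing the proof.
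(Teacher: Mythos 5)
Your proof is correct and is precisely the argument the paper intends: the theorem is stated without proof, with the remark that the same technique as Theorem~\ref{thm:main} applies, and your induction (tracking $v_i=w_i=Pe_i$, using Lemma~\ref{lemmatko} only for the norm on line 7, and noting that $\beta_{i+1}$ comes from a single-nonzero-summand inner product so no sign or exponent-range assumption on $\widetilde{\beta}_j$ is needed) carries that technique out faithfully.
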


%Note that Theorem~\ref{thm-nlanczos} can also be generalized 
%for $d < n$, similarly as for the Lanczos algorithm; see Section~\ref{sec:flan}. 

\subsection{Golub-Kahan bidiagonalization}
\label{subsec:GKbidiag}
Let $A \in \mathbb{R}^{n \times m}$, $v \in \mathbb{R}^n$, and denote
\begin{equation*}
    d = \min(d(AA^T,v),d(A^TA,A^Tv)).    
\end{equation*}
Assuming $k<d$, the Golub-Kahan bidiagonalization \cite{GoKa1965}
(Algorithm~\ref{alg:bidiagonal})
generates two sets of orthonormal vectors $s_1, \ldots, s_{k+1}$ and $w_1, \ldots, w_k$. 
The coefficients 
$\gamma_i$ and $\delta_{i+1}$ 
that appear in Algorithm~\ref{alg:bidiagonal}
are normalization coefficients.
\begin{algorithm}[ht]
\caption{Golub-Kahan bidiagonalization}

\label{alg:bidiagonal}

\begin{algorithmic}[0]

\STATE \textbf{input} $A$, $v$

\STATE $w_0 = 0$
\STATE $\delta_{1}s_1 =v$

\FOR{$i=1,\dots,k$}

\STATE $\gamma_i w_i = A^Ts_i - \delta_i w_{i-1}$

\STATE $\delta_{i+1}s_{i+1} = Aw_i - \gamma_i s_i$

\ENDFOR

\end{algorithmic}
\end{algorithm}

Denoting 
$S_k = [s_1,\dots,s_k]\in \mathbb{R}^{n \times k}$ 
and $W_k=[w_1,\dots,w_k] \in\mathbb{R}^{m \times k}$,
the vectors and coefficients generated by 
Algorithm~\ref{alg:bidiagonal}
satisfy
\begin{align*}
A^T S_k &= W_k L_k^T, \\
AW_k &= S_kL_k + s_{k+1}\delta_{k+1}e_k^T,
\end{align*}
where
\begin{equation*}
L_{k}=\left[\begin{array}{cccc}
\gamma_{1}\\
\delta_{2} & \gamma_{2}\\
 & \ddots & \ddots & \\
 &  & \delta_{k} & \gamma_{k}
\end{array}\right].
\end{equation*}
Under the assumption $k<d$, the coefficients $\gamma_i$'s as well as $\delta_i$'s are positive, $i=1,\dots,k$.

It is well known that the Golub-Kahan bidiagonalization is closely related to the Lanczos algorithm. In more detail, the orthonormal columns of $S_k$
can be seen as the Lanczos vectors generated by 
$AA^T$ with the starting vector $v$. Similarly, 
$W_k$ contains the Lanczos vectors generated by 
$A^TA$ and $A^Tv$. Therefore, it is not surprising that the results of 
Section~\ref{sec:flan}
for the Lanczos algorithm can be analogously  formulated also for the Golub-Kahan bidiagonalization.
We present here (without a proof) an analogy 
%of Lemma~\ref{lem:equiv} and 
of Theorem~\ref{thm:main} formulated for $A\in{\mathbb F}^{n\times n}$.\smallskip

% \begin{lemma}
% Assuming exact arithmetic, Algorithm~\ref{alg:bidiagonal} applied to  $A\in\mathbb{R}^{n \times n}$ and $v\in\mathbb{R}^n$ with $d=n$, where $d$ is defined by \eqref{eq:degbi},  produces sequences of vectors $s_i$ and $w_i$ equal to plus or minus columns of identity matrix if and only if
% $$
% A = PLP^T,\qquad v = \widetilde{\delta}_1Pe_1,
% $$
% with $P\in\mathbb{R}^{n \times n}$ being a signed permutation matrix and $L\in\mathbb{R}^{n \times n}$ being bidiagonal,
% $$
% L=\left[\begin{array}{cccc}
% \widetilde{\gamma}_{1}\\
% \widetilde{\delta}_{2} & \widetilde{\gamma}_{2}\\
%  & \ddots & \ddots & \\
%  &  & \widetilde{\delta}_{n} & \widetilde{\gamma}_{n}
% \end{array}\right], 
% $$ 
% where $\widetilde{\gamma}_j,\widetilde{\delta}_j> 0$, $j = 1,\ldots,n$. Moreover, $L_{n}$ resulting from Algorithm~\ref{alg:bidiagonal} satisfies $L_n=L$.
% \label{lemma-bidiag}
% \end{lemma}\smallskip

% Now we can use Lemma~\ref{lemma-bidiag} to parametrize inputs, for which the Golub-Kahan bidiagonalization computes exactly in finite precision arithmetic. \smallskip

\begin{theorem}
Consider the standard model of floating point arithmetic. Let
$$
A = PLP^T, \qquad v=\widetilde{\delta}_1Pe_1,
$$
where $P\in\mathbb{F}^{n \times n}$ is a sign permutation matrix and $L\in\mathbb{F}^{n \times n}$ is bidiagonal of the form
$$
L=\left[\begin{array}{cccc}
\widetilde{\gamma}_{1}\\
\widetilde{\delta}_{2} & \widetilde{\gamma}_{2}\\
 & \ddots & \ddots & \\
 &  & \widetilde{\delta}_{n} & \widetilde{\gamma}_{n}
\end{array}\right], 
$$ 
with $\widetilde{\gamma}_j,\widetilde{\delta}_j> 0$ and 
$\widetilde{\gamma}^2_j,\widetilde{\delta}^2_j$ within the exponent range.
%$ \mathtt{realmin} \leq \widetilde{\gamma}^2_j,\widetilde{\delta}^2_j\leq \mathtt{realmax}$, 
$j = 1,\ldots,n$. Then Algorithm~\ref{alg:bidiagonal} applied to $A$ and $v$ computes exactly. As a consequence, it holds that $L_{n} = L$.
\label{thm-bidiag}
\end{theorem}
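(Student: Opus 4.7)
My plan is to mirror the induction used in the proof of Theorem~\ref{thm:main}, now carrying two sequences $\{s_j\}$ and $\{w_j\}$ together with the coefficients $\{\delta_j\}$ and $\{\gamma_j\}$ along in parallel. The underlying mechanism is identical: as long as each working vector is a signed coordinate vector of the form \eqref{eq:special}, every subsequent matrix-vector product, axpy, and norm is carried out without rounding error, because multiplying a matrix by a single-entry vector reduces to selecting (and sign-flipping) one of its columns, and the Euclidean norm of a single-entry vector is exact by Lemma~\ref{lemmatko}.

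For the base case I would observe that $v=\widetilde{\delta}_1 Pe_1$ has the structure \eqref{eq:special}, so Lemma~\ref{lemmatko} gives $\bar{\delta}_1=\mathrm{fl}(\|v\|)=\widetilde{\delta}_1$, and the normalization $\bar{s}_1=\mathrm{fl}(v/\bar{\delta}_1)=Pe_1$ is exact by $\mathrm{fl}(\alpha/\alpha)=1$ and $\mathrm{fl}(0/\alpha)=0$. The inductive hypothesis at step $i$ would be that $\bar{s}_j=Pe_j$ for $j\leq i$, $\bar{w}_j=Pe_j$ for $j\leq i-1$, and all previously computed coefficients coincide with their tilded counterparts. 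For the induction step I would first evaluate $\mathrm{fl}(A^T s_i)$: because $s_i=\pm e_{\pi(i)}$ has a single nonzero entry, each output component is a single product of $\pm 1$ with a stored entry of $A^T$, hence exact, giving $PL^T e_i=P(\widetilde{\delta}_i e_{i-1}+\widetilde{\gamma}_i e_i)$ with the signs inherited from $P$. Subtracting $\delta_i w_{i-1}=\widetilde{\delta}_i Pe_{i-1}$ then cancels the $e_{\pi(i-1)}$-component exactly by $\mathrm{fl}(\alpha-\alpha)=0$, leaving a vector with a single nonzero $\pm\widetilde{\gamma}_i$ at position $\pi(i)$. This vector again satisfies \eqref{eq:special}, so Lemma~\ref{lemmatko} yields $\bar{\gamma}_i=\widetilde{\gamma}_i$ and the division produces $\bar{w}_i=Pe_i$. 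The entirely analogous argument for $\mathrm{fl}(Aw_i)=PLe_i=P(\widetilde{\gamma}_i e_i+\widetilde{\delta}_{i+1}e_{i+1})$ followed by subtraction of $\gamma_i s_i$ yields $\bar{\delta}_{i+1}=\widetilde{\delta}_{i+1}$ and $\bar{s}_{i+1}=Pe_{i+1}$, closing the induction. The final identity $L_n=L$ is then immediate.

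The only point requiring care, and what I would identify as the main (albeit minor) obstacle, is the bookkeeping of the signs $\sigma_j=\pm 1$ from $P$: the two cancellations in the subtractions rely not merely on equality of magnitudes but on equality of floating point numbers, which must be traced back through the induction hypothesis to the already-computed $\bar{\delta}_i$ and $\bar{\gamma}_i$. Beyond this the argument is mechanical and parallels the Lanczos proof step by step, which presumably is why the authors state the theorem without proof.
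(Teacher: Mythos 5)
Your proof is correct and follows exactly the route the paper intends: the theorem is stated without proof as an analogy of Theorem~\ref{thm:main}, and your induction—propagating $\bar{s}_j=Pe_j$, $\bar{w}_j=Pe_j$ through the two coupled recurrences, using the exact cancellation $\mathrm{fl}(\alpha-\alpha)=0$ and Lemma~\ref{lemmatko} for the norms—is precisely the mechanical transcription of that proof to Algorithm~\ref{alg:bidiagonal}. The sign bookkeeping you flag is handled correctly by tracing $\bar{\delta}_i=\widetilde{\delta}_i$ and $\bar{\gamma}_i=\widetilde{\gamma}_i$ through the hypothesis, so there is no gap.
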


%Note that Theorem~\ref{thm-bidiag} can also be generalized 
%for $d < n$, similarly as for the Lanczos algorithm; see Section~\ref{sec:flan}. 

\subsection{Block Lanczos algorithm}
\label{subsec:blockLanc}

The Lanczos algorithm has also an analogy for block matrices known as the block-Lanczos algorithm; see \cite{GoUn1977}. Given a block symmetric matrix $A\in\mathbb{R}^{n \times n}$ with $p$ by $p$ blocks, i.e., $n = mp$ for some $m\in\mathbb{N}$, and a block vector $U_1\in\mathbb{R}^{n \times p}$, we can define a sequence of block Krylov subspaces 
\[
\mathcal{K}_{k}(A,U_1)=\mathrm{colspan}\{U_1,AU_1,\dots,A^{k-1}U_1\}
\]
and denote the maximal achievable dimension of these nested subspaces as $d = d(A,U_1)$. 

Let $I$ denote the $p$ by $p$ identity matrix and let $0$ denote the $p$ by $p$ zero matrix. Let $U_1$ has orthonormal columns and $d=\mathcal{K}_{m}(A,U_1)=n$. Assuming $k < m$, the block Lanczos algorithm (Algorithm~\ref{alg:blockLanc})
\begin{algorithm}[ht]
\caption{block Lanczos algorithm}

\label{alg:blockLanc}

\begin{algorithmic}[0]

\STATE \textbf{input} $A\in\mathbb{R}^{n \times n}$, $U_1\in\mathbb{R}^{n \times p}$ such that $U_1^TU_1 = I$

\STATE $U_0 = U_1$, $B_1 = 0$

\STATE $M_1 = U_1^TAU_1$

\FOR{$i=1,\dots,k$}

\STATE $R_{i+1} = AU_i - U_iM_i -  U_{i-1}B^T_i$

\STATE $R_{i+1} = U_{i+1}B_{i+1}$ (QR factorization of $R_{i+1}$)

\STATE $M_{i+1} = U_{i+1}^TAU_{i+1}$

\ENDFOR

\end{algorithmic}
\end{algorithm}
generates an orthonormal sequence of block vectors $U_{i}\in\mathbb{R}^{n \times p}$, i.e., $U_i^TU_j = \delta_{i,j}I$ ($\delta_{i,j}$ denotes Kronecker delta), satisfying the relation 
$$ 
A\left[\:U_1,\ldots, U_k\:\right] = \left[\:U_1,\ldots, U_k\:\right]T_k + R_{k+1}\left[\:0,\ldots,0, I\:\right],
$$
where
$$
T_k = \left[\begin{array}{cccc}
M_1 & B_2^T\\
B_{2} & \ddots & \ddots \\
 & \ddots & \ddots & B^T_k  \\
 &  & B_k & M_k
\end{array}\right]
$$
is a block tridiagonal matrix. The blocks $M_j \in\mathbb{R}^{p \times p}$, $j = 1,\ldots,k$, are symmetric matrices and $B_{j+1}\in\mathbb{R}^{p \times p}$, $j = 1,\ldots,k-1$, are upper triangular matrices.

Further, we define the \textit{signed block permutation} matrix as a square block matrix with only one nonzero block in each block row and block column, where the nonzero blocks are sign permutation matrices. 
We now present an analogy of Theorem~\ref{thm:main}.
\smallskip

\begin{theorem}
Consider the standard model of floating point arithmetic.
Let $n = mp$ for $n,m,p\in\mathbb{N}$, and let
$$
A = PTP^T, \quad U_1=P\left[\:I, 0,\ldots, 0\:\right]^T,
$$
where $P\in\mathbb{F}^{n \times n}$ is a signed block permutation matrix with blocks of size $p$, $I,0\in\mathbb{F}^{p \times p}$, and $T\in\mathbb{F}^{n \times n}$ is a block tridiagonal matrix of the form
$$
T=\left[\begin{array}{cccc}
\widetilde{M}_1 & \widetilde{B}_2^T\\
\widetilde{B}_{2} & \ddots & \ddots\\
 & \ddots & \ddots & \widetilde{B}^T_m  \\
 &  & \widetilde{B}_m & \widetilde{M}_m
\end{array}\right], 
$$ 
where $\widetilde{M}_i\in\mathbb{F}^{p \times p}$ are symmetric and $\widetilde{B}_{i+1}\in\mathbb{F}^{p \times p}$ are upper triangular with positive entries on the diagonal. Assume that the QR factorization 
in Algorithm~\ref{alg:blockLanc} is computed 
using the classical or modified Gram-Schmidt algorithm without any underflow or overflow.
Then Algorithm~\ref{alg:blockLanc} applied to $A$ and $U_1$ computes exactly. As a consequence, it holds that $T_{m} = T$.
\label{thm-blanczos}
\end{theorem}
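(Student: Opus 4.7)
The plan is to proceed by induction on the block index $i$, in direct analogy with the proof of Theorem~\ref{thm:main}. Let $E_j \in \mathbb{F}^{n \times p}$ denote the block matrix with the $p \times p$ identity $I$ in the $j$th block position and zero elsewhere. The invariant I will maintain is that the computed block Lanczos vectors satisfy $\bar{U}_j = P E_j$ (a signed sub-permutation matrix with orthonormal columns of the form $\pm e_{s}$), and that $\bar{M}_j = \widetilde{M}_j$ and $\bar{B}_j = \widetilde{B}_j$. The base case $\bar{U}_1 = P E_1$ is immediate from the assumption on the initial block; then $A\bar{U}_1 = PTP^T PE_1 = PTE_1$ is computed exactly in floating point because each column of $\bar{U}_1$ is $\pm e_{s}$, so each entry of $A\bar{U}_1$ is a sum with a single nonzero term. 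Taking $\bar{M}_1 = \bar{U}_1^T(A\bar{U}_1) = E_1^T T E_1 = \widetilde{M}_1$, each entry is again a single-nonzero-term inner product and is therefore computed exactly.

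For the inductive step, using the structure of block-tridiagonal $T$, one has $T E_i = \widetilde{B}_i^T E_{i-1}^T + \widetilde{M}_i E_i^T + \widetilde{B}_{i+1} E_{i+1}^T$ (interpreted blockwise), so
$$
A\bar{U}_i - \bar{U}_i \bar{M}_i - \bar{U}_{i-1}\bar{B}_i^T \;=\; PT E_i - PE_i\widetilde{M}_i - PE_{i-1}\widetilde{B}_i^T \;=\; PE_{i+1}\widetilde{B}_{i+1}.
$$
In floating point, each entry of $A\bar{U}_i$ comes from a sum with a single nonzero term (as in the base case), while $\bar{U}_i\bar{M}_i$ and $\bar{U}_{i-1}\bar{B}_i^T$ reproduce exactly the signed entries of $\widetilde{M}_i$ and $\widetilde{B}_i^T$ that appear in the corresponding block rows of $A\bar{U}_i$. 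The cancellations are exact, so $\bar{R}_{i+1} = PE_{i+1}\widetilde{B}_{i+1}$ with no rounding error. The result has only $p$ block rows of its support (inside the block row indexed by the image of $i+1$ under the block permutation), and within that block its columns are $S\widetilde{B}_{i+1}$ for the signed permutation block $S$ of $P$ sitting in position $(i+1)$.

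The main obstacle, and the only genuinely new ingredient relative to Theorem~\ref{thm:main}, is showing that the QR factorization computed inside Algorithm~\ref{alg:blockLanc} is exact. Here the assumption of classical or modified Gram-Schmidt is used. I would argue column-by-column: the first column of $\bar{R}_{i+1}$ has a single nonzero entry of the form $\pm \widetilde{B}_{i+1}[1,1]$ (because $\widetilde{B}_{i+1}$ is upper triangular and $S$ is a signed permutation), so Lemma~\ref{lemmatko} yields its norm exactly as $\widetilde{B}_{i+1}[1,1]$, and normalization produces a signed unit vector $\pm e_{s_1}$, hence the first column of $PE_{i+1}$. Inductively on the inner loop index $l$, each orthogonalization inner product $\bar{U}_{i+1}[:,l]^T \bar{r}_k$ reduces to a single-term sum that picks out one entry of $\bar{r}_k$ with the correct sign, yielding $\widetilde{B}_{i+1}[l,k]$ exactly; the subsequent subtraction exactly zeros out the $l$th of the $k$ currently nonzero entries of $\bar{r}_k$ (by cancellation of equal floating point numbers), leaving a vector with exactly one fewer nonzero entry. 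After $k-1$ steps, $\bar{r}_k$ has a single nonzero entry $\pm \widetilde{B}_{i+1}[k,k]$, so Lemma~\ref{lemmatko} again gives the norm exactly, and normalization completes the step with $\bar{U}_{i+1}[:,k] = \pm e_{s_k}$. Hence $\bar{U}_{i+1} = PE_{i+1}$ and $\bar{B}_{i+1} = \widetilde{B}_{i+1}$ exactly, closing the induction. The identical argument works for classical Gram-Schmidt because inner products and subtractions against the already-formed signed unit vectors behave in the same way, and after the single subtraction round each column still has a single nonzero entry to be normalized.
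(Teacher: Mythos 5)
Your proof is correct; the paper in fact states Theorem~\ref{thm-blanczos} without any proof, and your argument is exactly the natural extension of the proof of Theorem~\ref{thm:main} (induction on the block index, with all matrix products reducing to single-nonzero-term sums and exact cancellations). You also correctly isolate and handle the one genuinely new point, namely that the Gram--Schmidt QR of $PE_{i+1}\widetilde{B}_{i+1}$ reproduces $PE_{i+1}$ and $\widetilde{B}_{i+1}$ exactly via Lemma~\ref{lemmatko} and $\mathrm{fl}(\alpha-\alpha)=0$, which is precisely where the theorem's hypotheses on the upper triangular blocks and on the absence of underflow/overflow are used.
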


%Lemma~\ref{lemma-blanczos} and
%Theorem~\ref{thm-blanczos} can also be generalized to the case $d < n$, where $d$ is divisible by $p$, similarly as for the Lanczos algorithm; see Section~\ref{sec:flan}. 

\subsection{Linear solvers}

In the previous we discussed algorithms for computing bases of the 
corresponding subspaces. We have shown that if the input data have the prescribed nonzero structure, then the basis (block) vectors as well as the projected matrices (defined through the coefficients that appear in the algorithms) are computed exactly. 

The general idea of linear solvers is to look for an approximate solution $x_k$ as a linear combination of the basis vectors. The coefficients 
of the linear combination are defined to be the solution of the 
projected problem. If the algorithm 
for computing the basis is exact, then the projected problem 
is given exactly. To obtain $x_k$, we have 
to solve the (exact) projected problem numerically. Hence, $\bar{x}_k$ is influenced only by 
rounding errors arising when solving the projected problem. Note that 
projected problems are solved using direct methods like
Cholesky or QR factorizations, whose numerical behavior is well understood; see, e.g., \cite{B:Hi2002}. 
In summary, one can expect that 
the computed approximate solution $\bar{x}_k$ is close to $x_k$, if the projected problem is solved accurately.

To demonstrate the above general ideas, consider for example the Arnoldi algorithm, see Section~\ref{subsec:Arnoldi}, applied to $A$ and $v$ having the structure described by Theorem~\ref{thm-arnoldi}. 
For simplicity assume that $\|v\|=1$.
Then $V_k$ as well as 
$$
	H_{k+1,k} \equiv 
	\left[\begin{array}{c}
H_{k} \\
h_{k+1,k}e^T \\
\end{array}\right]
$$
are computed exactly. Starting with $x_0=0$, the GMRES method \cite{SaSch1986} constructs
approximations $x_k$ to the solution of $Ax=v$ of the form 
$$
	x_k = V_k y_k,\qquad y_k =
	\arg\min_y \left\| H_{k+1,k} y -e_1 \right\|, 
$$
where the least squares problem is solved numerically using the QR factorization. Denote the computed coordinate vector by $\bar{y}_k$. Then 
the computed approximate solution $\bar{x}_k$ satisfies 
$$
	\| x_k - \bar{x}_k \| = 	\| V_k y_k - V_k \bar{y}_k \| = 
	\| y_k - \bar{y}_k \|.
$$
Similar consideration can be made for other linear solvers 
that are based on algorithms 
discussed in Sections~\ref{subsec:nlanczos}--\ref{subsec:blockLanc}.

%\cred{Let us suppose we want to solve a system of linear equations. Among other options, we can use a iterative linear solver. In general, linear solvers project in each iteration the original problem onto a smaller space, solve the projected problem and transform the approximation of the solution from the basis of the smaller space to the standard basis. The projection of the original problem is implemented via some algorithm, which computes an orthonormal basis of the smaller space. As it was mentioned before, all the algorithms presented in this section generate bases of some specified spaces and each of the algorithms can be connected to a linear solver. In particular, the Golub-Kahan bidiagonalization is used in LSQR, first described in \cite{PaSa1982}, the block Lanczos algorithm is part of the block variant of CG, defined in \cite{Ol1980}, and the Arnoldi algorithm is core of GMRES, \cite{SaSch1986}. Based on the results presented in this section, we know that for a category of inputs defined previously in this section, the algorithms providing the bases compute exactly. The transformation of the approximation of the solution between the bases therefore does not produce any rounding error. This implies that the inaccuracy of the approximation of the solution is in finite precision dependent only on errors made while solving the projected equation. Since the linear solvers discussed above solve this equation in numerically stable way, we can expect that these linear solvers provide the solution "almost exactly" for the mentioned inputs. }
%
\section{Context and application of results}
\label{sec:cons}

In this section we discuss the application of our results related
to the Lanczos algorithm (Sections~\ref{sec:flan} and \ref{sec:cg}). Analogous considerations can be made also
for other methods discussed in Section~\ref{sec:other}.

\subsection{Various representatives of the original problem\label{subsec:distribution}}
Let $M,\,N  \in \mathbb{R}^{n\times n}$ be symmetric matrices, and let $r,\,s\in \mathbb{R}^{n}$.
We define an {\em equivalence relation} 
in the following way. 
We say that the problem represented by $(M,r)$ is equivalent to the problem 
$(N,s)$, if there is an orthogonal matrix $Q\in \mathbb{R}^{n\times n}$ such that $N = Q^TMQ$ and 
$s = Q^T r$. Having defined the equivalence relation, one may split the set of all couples $(M,c)$  into {\em equivalence classes}.

In the case of the Lanczos algorithm, the original problem is represented by a symmetric matrix $A$ and
a unit norm starting vector~$v_{1}$, 
so that  all equivalent problems are of the form $(Q^{T}AQ,Q^T v_1$).
The equivalence of problems can also be seen via the distribution
function $\omega(\lambda)$ that corresponds to the original data.
Let $U\Lambda U^{T}$ be
the spectral decomposition of $A$, where $U=\left[u_{1},\dots,u_{n}\right]$
is orthogonal and $\Lambda=\mathrm{diag}(\lambda_{1},\dots,\lambda_{n})$.
Assume for simplicity that the eigenvalues $\lambda_{i}$ of $A$
are distinct and increasingly ordered.
For $i=1,\dots,n$ denote
\[
\omega_{i}\equiv\left(v_{1}^{T}u_{i}\right)^{2}\quad\mbox{so that}\quad\sum_{i=1}^{n}\omega_{i}=1.
\]
The distribution function $\omega(\lambda)$ that corresponds to $A$
and $v_{1}$ is defined using
\begin{equation}
\omega(\lambda)\equiv\;\left\{ \;\begin{array}{rcl}
0 & \textnormal{for} & \lambda<\lambda_{1}\,,\\[1mm]
\sum_{j=1}^{i}\omega_{j} & \textnormal{for} & \lambda_{i}\leq\lambda<\lambda_{i+1}\,,\quad1\leq i\leq n-1\,,\\[1mm]
1 & \textnormal{for} & \lambda_{n}\leq\lambda\,;
\end{array}\right.\,\label{eq:schema_hermit}
\end{equation}
see Figure~\ref{fig:df}.
\begin{figure}[ht!]
\begin{center} \unitlength 0.8mm 
\begin{picture}(150,68) \linethickness{0.1pt} 
\put(7,10){\line(1,0){143}}\linethickness{0.1pt} 
\put(8,9){\line(0,1){60}} 
\multiput(7,68)(4,0){34}{\line(1,0){2}} 
\put(10,10){\line(1,0){10}}\linethickness{1.6pt} 
\put(20,10){\line(0,1){8}}\linethickness{0.1pt} 
\put(20,18){\line(1,0){5}}\linethickness{1.6pt} 
\put(25,18){\line(0,1){5}}\linethickness{0.1pt} 
\put(25,23){\line(1,0){10}}\linethickness{1.6pt} 
\put(35,23){\line(0,1){7}}\linethickness{0.1pt} 
\put(35,30){\line(1,0){5}}\linethickness{1.6pt} 
\put(40,30){\line(0,1){10}}\linethickness{0.1pt} 
\put(40,40){\line(1,0){30}}\linethickness{1.6pt} 
\put(70,40){\line(0,1){3}}\linethickness{0.1pt} 
\put(70,43){\line(1,0){25}}\linethickness{1.6pt} 
\put(95,43){\line(0,1){5}}\linethickness{0.1pt} 
\put(95,48){\line(1,0){10}}\linethickness{1.6pt}
\put(105,48){\line(0,1){2}}\linethickness{0.1pt}
\put(105,50){\line(1,0){5}}\linethickness{1.6pt}
\put(110,50){\line(0,1){8}}\linethickness{0.1pt}
\put(110,58){\line(1,0){20}}\linethickness{1.6pt}
\put(130,58){\line(0,1){5}}\linethickness{0.1pt}
\put(130,63){\line(1,0){10}}\linethickness{1.6pt}
\put(140,63){\line(0,1){5}}\linethickness{0.1pt}
\put(140,68){\line(1,0){10}} 

\put(20,11){\line(0,-1){3}}
\put(25,16){\line(0,-1){3}}
\put(25,11){\line(0,-1){3}}
\put(35,21){\line(0,-1){3}}
\put(35,16){\line(0,-1){3}}
\put(35,11){\line(0,-1){3}}
\put(140,60){\line(0,-1){3}}
\put(140,54){\line(0,-1){3}}
\put(140,48){\line(0,-1){3}}
\put(140,29){\line(0,-1){3}}
\put(140,23){\line(0,-1){3}}
\put(140,17){\line(0,-1){3}}
\put(140,11){\line(0,-1){3}}

\put(140,38){\makebox(0,0)[cc]{$\vdots$}} 
\put(2,10){\makebox(0,0)[lc]{0}} 
\put(2,68){\makebox(0,0)[cc]{1}}
\put(14,12){\makebox(0,0)[lb]{$\omega_1$}}
\put(24,19){\makebox(0,0)[rb]{$\omega_2$}}
\put(34,24.5){\makebox(0,0)[rb]{$\omega_3$}}
\put(39,33){\makebox(0,0)[rb]{$\omega_4$}}
\put(139,64){\makebox(0,0)[rb]{$\omega_{n}$}}  
\put(20,5){\makebox(0,0)[ct]{$\lambda_1$}}
\put(26,5){\makebox(0,0)[ct]{$\lambda_2$}}
\put(36,5){\makebox(0,0)[ct]{$\lambda_3$}}
\put(57,4){\makebox(0,0)[ct]{\dots}}
\put(120,4){\makebox(0,0)[ct]{\dots}}
\put(141,5){\makebox(0,0)[ct]{$\lambda_{n}$}}
\end{picture}
\end{center}
\label{fig:df}
\caption{The distribution function $\omega(\lambda)$.}
\end{figure}
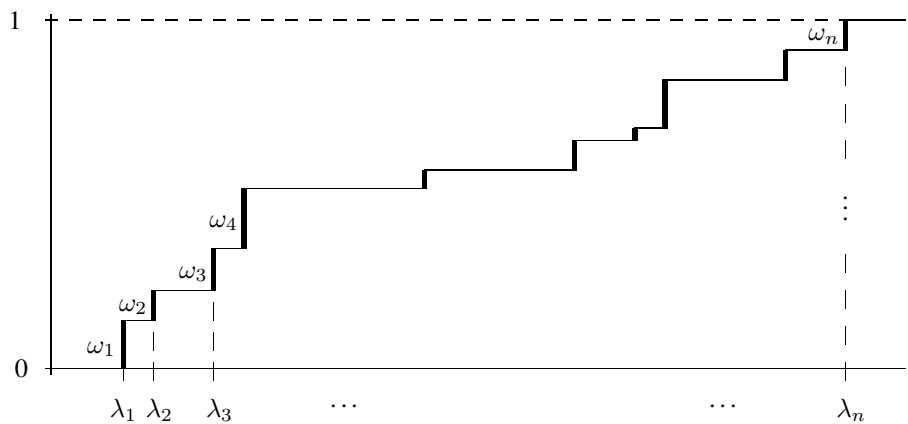

If two problems share the same distribution function, then there exists
an orthogonal matrix $Q$ that transforms one problem into the other, i.e.,
the problems are equivalent.
All problems with the same distribution function form an equivalence class,
and $(A,v_{1})$ can be seen as a representative of this equivalence class.
 Another representative is $(\Lambda,w)$, where
\[
w\equiv\begin{bmatrix}\omega_{1}^{1/2}, & \dots, & \omega_{n}^{1/2}\end{bmatrix}^{T},
\]
or $(\Lambda,\tilde{w})$, where $\tilde{w}\equiv U^{T}v_{1}$. Finally, assuming
for simplicity that $\omega_{i}\neq0$ for $i=1,\dots,n$, it holds
that $d=n$, and $(T_{n},e_1)$
resulting from the exact Lanczos
algorithm applied to $A$ and $v_1$ stands for yet another representative; see Figure~\ref{fig:repre}. 
Therefore, any theoretical behaviour of the Lanczos algorithm (represented by the generated tridiagonal matrices $T_k$)  can be observed for 
the initial data having 
the structure
\eqref{eq:structure}. In other words, 
concentrating on test problems having 
the structure \eqref{eq:structure}
is not restrictive and covers any theoretical behaviour of the Lanczos algorithm.
\begin{figure}[ht]
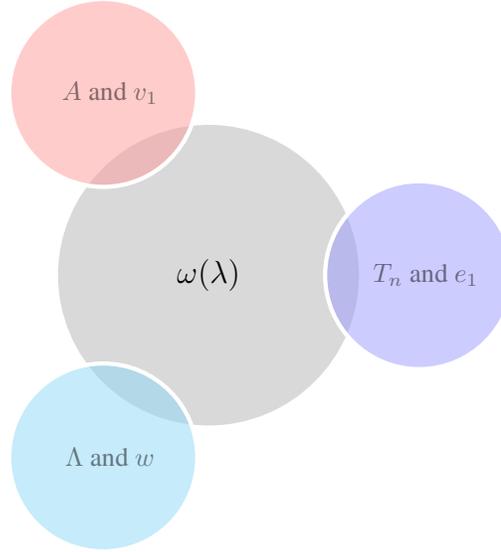

\begin{center} 
\smartdiagram[bubble diagram]{
$\omega(\lambda)$, $A$ and $v_{1}$, $\Lambda$ and $w$, $T_n$ and $e_{1}$}   
\end{center}
\caption{Various representatives of $\omega(\lambda)$.}
\label{fig:repre}
\end{figure}

The representative $(\Lambda,w)$ provides directly the key information
about the distribution function. On the other hand, $(T_{n},e_{1})$
is the only representative that guarantees that the Lanczos algorithm
(or the corresponding Stieltjes process, see, e.g., \cite{GrHa1984,OLStTi2007})
%applied to this data 
will not be affected by rounding errors; see Theorem~\ref{thm:main}. 

Assuming $d=n$ and having one of the representatives, one can ask
how to compute the other representatives in a {\em numerically reliable
way}. Starting from $(A,v_{1})$, we can find $(T_{n},e_{1})$
using the Lanczos (or Arnoldi) algorithm with double reorthogonalization
\cite{GrSt1992,GiLaRoEs2005}. If the double reorthogonalization is
not used, the rounding errors can strongly influence the computations,
and the computed $\bar{T}_{n}$ can be completely different from the exact
$T_{n}$. Instead of double reorthogonalization, one can alternatively
use Householder reflections to transform $(A,v_{1})$ to $(\tilde{T},\tilde{v}_{1}),$
where $\tilde{T}$ is tridiagonal, and then Givens rotations to transform
$\tilde{v}_{1}$ to $e_{1}$ while preserving the tridiagonal
structure of the transformed matrix using the chasing the bulge strategy.
In general, to compute the representative $(T_{n},e_{1})$ in a numerically
reliable way, one has to store a dense matrix, and
the cost of computations is then $\mathcal{O}(n^{3})$ flops.

Concerning the other two representatives, there exist numerically reliable transformations between $(T_{n},e_{1})$
and $(\Lambda,w)$ with the cost of $\mathcal{O}(n^{2})$ flops and
low memory requirements. In more detail, starting from $(T_{n},e_{1})$,
one can use the Golub-Welsh algorithm~\cite{GoWe1969} to compute $(\Lambda,w)$.
In the opposite way, having $(\Lambda,w)$, the \texttt{rkpw} algorithm
of Gragg and Harrod \cite{GrHa1984} or the \texttt{pftoqd} algorithm
of Laurie \cite{La1999} are capable to compute $(T_{n},e_{1})$ reliably.

\subsection{Any theoretical behavior is observable also numerically\label{subsec:theoretical}}

To investigate theoretical as well as numerical behaviour of Krylov
subspace methods, it is crucial to ask convenient questions that help in understanding complicated phenomenons. Here we discuss three
questions of that kind.

An important question asked in literature, see, e.g., \cite{HeSt1952,Sc1979,GrSt1994,GrPtSt1996,TeMe2012,Me2020},
is about {\em possible theoretical behaviour} of the considered method.
For example, in the case of the conjugate gradient method, one can
prescribe any decreasing convergence curve for the $A$-norm of the
error and, at the same time, any convergence curve for the residual
norms (positive numbers), and then construct a symmetric positive definite matrix~$A$
and a right hand side $b$ such that exact CG applied to $Ax=b$ generates
the prescribed convergence curves; see \cite{HeSt1952,Me2020}.
In more detail, the CG coefficients $\delta_{k}$, see Algorithm~\ref{alg:cg},
satisfy
\[
\delta_{k}=\frac{\|r_{k}\|^{2}}{\|r_{k-1}\|^{2}}.
\]
Therefore, if the residual norms are given, then $\delta_{k}$'s are
known. Moreover, since 
\[
\|x-x_{k}\|_{A}^{2}=\gamma_{k}\|r_{k}\|^{2}+\|x-x_{k+1}\|_{A}^{2},
\]
see \cite{HeSt1952}, and residual norms as well $A$-norms
of the error are prescribed, also $\gamma_{k}$'s are known. Finally,
as discussed in Section~\ref{sec:cg}, CG computes implicitly the $LDL^{T}$ factorization
of the tridiagonal matrix $T_{k}$. Assuming again for simplicity
that $d=n$, the coefficients $\delta_{1},\dots,\delta_{n-1}$ and
$\gamma_{0},\dots,\gamma_{n-1}$ determine uniquely the tridiagonal
matrix $T_{n}$. Defining $A=T_{n}$ and $b=\|r_{0}\|e_{1}$, we obtain
a system of linear equations such that exact CG applied to $Ax=b$
generates the prescribed residual norms and $A$-norms of the error.
For more details and the related discussion, see \cite{Me2020}. Let
us emphasize that the constructed matrix is a Jacobi matrix and that
the right hand side vector is a multiple of $e_{1}$. 

Another question that can help in understanding numerical behaviour
of Krylov subspace methods is the following one. Can the observed
{\em numerical behaviour be interpreted as the behaviour of the exact algorithm}
applied to a problem that is, in some sense, close to the original
one? In other words, we would like to find a mathematical model of
the results of finite precision computations of the considered algorithm.
Note that the term ``a problem close to the original one'' can have
different meanings. For example, it can be understood in the classical
backward error sense, i.e., one can look for a small perturbation
of the original data, or, as in the case of the Lanczos and CG algorithms,
one can look for a small perturbation of the distribution function
$\omega(\lambda)$ discussed in Subsection~\ref{subsec:distribution}.
In particular, Greenbaum \cite{Gr1989} showed that the results of
the finite precision Lanczos algorithm can be interpreted as the results
of the exact Lanczos algorithm applied to a larger problem with a
matrix having clustered eigenvalues around the original eigenvalues
of $A$. The perturbed distribution function has larger support (clusters
of eigenvalues) and the sum of weights that correspond to the $i$th
cluster is equal to the original weight $\omega_{i}$. Note that the
larger matrix, used in \cite{Gr1989} for simulating the behaviour
of the finite precision Lanczos algorithm, was a Jacobi matrix, and
that the starting vector was a multiple of $e_{1}$.
Analogous results can be obtained also for CG, but here the exact CG algorithm applied to the model
problem will not generate exactly the same convergence curves (the residual norms and the $A$-norms of the error) as the finite precision CG algorithm applied to the original data. However, it will generate their very close approximations; see \cite{Gr1989}. 

We now comment on our results. We have shown that if the matrix $A$ and the starting vector $v$
have the structure described in \eqref{eq:structure}, then the Lanczos algorithm computes exactly
in the standard floating point arithmetic. Moreover, a variant of CG
(Algorithm~\ref{alg:lanczos-cg}) applied to a $Ax=b$ where $A$ and $b$ have the
structure \eqref{eq:structure}, computes almost exactly. Hence, since
the above mentioned questions lead to systems having the structure 
\eqref{eq:structure}, our results allow to check
the answers numerically and without reorthogonalization, even for large problems.

Moreover, results of this paper give the answer to the following question:
Can any {\em theoretical behavior} of the Lanczos and CG algorithms {\em be observed
also numerically} (up to the relative accuracy limited by machine precision), without using reorthogonalization or extended precision arithmetic?
In more detail, the theoretical behaviour of the Lanczos algorithm is represented by the generated matrices $T_k$. As discussed in Section~\ref{subsec:distribution}, any theoretical behavior 
of the Lanczos algorithm can be observed for the representative $(T,e_1)$, where $T\in\mathbb{R}^{n\times n}$ is a Jacobi matrix. 
The results of the exact Lanczos algorithm applied to $T$ and $e_1$ are then represented 
by the leading principal submatrices of $T$.
Converting the matrix $T$ into the considered floating
point arithmetic we obtain $\bar{T}= \mathrm{fl}(T)$, and the data $(\bar{T},e_1)$ have the structure \eqref{eq:structure}. Therefore, 
the finite precision Lanczos algorithm applied to 
$\bar{T}$ and $e_1$ computes exactly, i.e., it generates
the leading principal submatrices $\bar{T}_k$ of $\bar{T}$, 
and it holds that $\bar T_k = \mathrm{fl}(T_k)$.
%; see \eqref{eqn:fl1}.
In this sense, any theoretical behavior of the Lanczos algorithm represented by real matrices $T_k$ can be observed also numerically.
Using the results of Section~\ref{sec:cg}, analogous conclusion holds also for CG implemented using Algorithm~\ref{alg:lanczos-cg}.

\subsection{Theoretical study of (large) model problems}

Numerical experiments studying {\em theoretical behaviour} of the CG and Lanczos algorithms, but also of other methods and algorithms mentioned in 
Section~\ref{sec:other},
are in general restricted to relatively small problems only. To be sure that the computed results agree with the exact results one  either has to reorthogonalize or to use extended precision arithmetic.
To reorthogonalize, all the basis vectors have to be stored, and memory requirements do not allow to handle large problems. When using extended precision arithmetic, one usually has to consider a huge number of valid digits leading to very slow computations even for small problems.

The results of this paper provide
a new practical tool for the analysis of the theoretical as well as finite precision behaviour of the considered algorithms, including the analysis of the behaviour of error estimates, e.g.,  the error estimates of the $A$-norm of the error in CG. We can use this tool
in the standard floating point arithmetic,  without reorthogonalization, and for potentially very large problems.
We only have to be able to construct model problems having 
the desired properties
and the prescribed nonzero structure. Then, the prescribed nonzero structure ensures that the finite precision computations are exact for the algorithms that compute the basis vectors, and almost exact for the corresponding linear solvers.

%The theory presented in Sections~\ref{sec:flan}--\ref{sec:other} give us a
%new practical tool to study theoretical as well as finite precision behaviour of the considered algorithms for potentially very large problems. We only have to construct model problems having the prescribed nonzero structure, and then the finite precision computations are exact for the algorithms computing the basis vectors, and almost exact for the corresponding linear solvers.
%For example, we can prescribe 
%the eigenvalues and the weights, and construct the corresponding tridiagonal matrix $T$ using the \texttt{rkpw}
%or \texttt{pftoqd} algorithms. Then, we can study the theoretical
%behaviour of the Lanczos and CG algorithms applied to $T$ and $e_{1}$;
%see also \cite{OLStTi2007}. 

\bibliographystyle{plain}

\begin{thebibliography}{10}

\bibitem{Ar1952}
W.~E. Arnoldi.
\newblock The principle of minimized iteration in the solution of the matrix
  eigenvalue problem.
\newblock {\em Quart. Appl. Math.}, 9:17--29, 1951.

\bibitem{B:De1997}
J.~W. Demmel.
\newblock {\em Applied numerical linear algebra}.
\newblock Society for Industrial and Applied Mathematics (SIAM), Philadelphia,
  PA, 1997.

\bibitem{TeMe2012}
J.~Duintjer~Tebbens and G.~Meurant.
\newblock Any {R}itz {V}alue {B}ehavior {I}s {P}ossible for {A}rnoldi and for
  {GMRES}.
\newblock {\em SIAM J. Matrix Anal. Appl.}, 33(3):958--978, 2012.

\bibitem{GiLaRoEs2005}
L.~Giraud, J.~Langou, M.~Rozlo{\v{z}}n{\'{\i}}k, and J.~van~den Eshof.
\newblock Rounding error analysis of the classical {G}ram-{S}chmidt
  orthogonalization process.
\newblock {\em Numer. Math.}, 101(1):87--100, 2005.

\bibitem{GoKa1965}
G.~Golub and W.~Kahan.
\newblock Calculating the singular values and pseudo-inverse of a matrix.
\newblock {\em J. Soc. Indust. Appl. Math. Ser. B Numer. Anal.}, 2:205--224,
  1965.

\bibitem{B:GoMe2010}
G.~H. Golub and G.~Meurant.
\newblock {\em Matrices, moments and quadrature with applications}.
\newblock Princeton University Press, USA, 2010.

\bibitem{GoUn1977}
G.~H. Golub and R.~Underwood.
\newblock The block {L}anczos method for computing eigenvalues.
\newblock In {\em Mathematical software, {III} ({P}roc. {S}ympos., {M}ath.
  {R}es. {C}enter, {U}niv. {W}isconsin, {M}adison, {W}is., 1977)}, pages
  361--377. Publ. Math. Res. Center, No. 39, 1977.

\bibitem{GoWe1969}
G.~H. Golub and J.~H. Welsch.
\newblock Calculation of {G}auss quadrature rules.
\newblock {\em Math. Comp. 23 (1969), 221-230; addendum, ibid.},
  23(106):A1--A10, 1969.

\bibitem{GrHa1984}
W.~B. Gragg and W.~J. Harrod.
\newblock The numerically stable reconstruction of {J}acobi matrices from
  spectral data.
\newblock {\em Numer. Math.}, 44(3):317--335, 1984.

\bibitem{Gr1989}
A.~Greenbaum.
\newblock Behavior of slightly perturbed {L}anczos and conjugate-gradient
  recurrences.
\newblock {\em Linear Algebra Appl.}, 113:7--63, 1989.

\bibitem{GrPtSt1996}
A.~Greenbaum, V.~Pt{\'a}k, and Z.~Strako{\v{s}}.
\newblock Any nonincreasing convergence curve is possible for {GMRES}.
\newblock {\em SIAM J. Matrix Anal. Appl.}, 17(3):465--469, 1996.

\bibitem{GrSt1992}
A.~Greenbaum and Z.~Strako{\v{s}}.
\newblock Predicting the behavior of finite precision {L}anczos and conjugate
  gradient computations.
\newblock {\em SIAM J. Matrix Anal. Appl.}, 13(1):121--137, 1992.

\bibitem{GrSt1994}
A.~Greenbaum and Z.~Strako{\v{s}}.
\newblock Matrices that generate the same {K}rylov residual spaces.
\newblock In {\em Recent advances in iterative methods}, volume~60 of {\em IMA
  Vol. Math. Appl.}, pages 95--118. Springer, New York, 1994.

\bibitem{HeSt1952}
M.~R. Hestenes and E.~Stiefel.
\newblock Methods of conjugate gradients for solving linear systems.
\newblock {\em J. Research Nat. Bur. Standards}, 49:409--436, 1952.

\bibitem{B:Hi2002}
N.~J. Higham.
\newblock {\em Accuracy and Stability of Numerical Algorithms}.
\newblock Society for Industrial and Applied Mathematics (SIAM), Philadelphia,
  PA, second edition, 2002.

\bibitem{T:Ku2018}
M.~Kub\'{\i}nov\'a.
\newblock {\em Numerical Methods in Discrete Inverse Problems}.
\newblock PhD thesis, Faculty of Mathematics and Physics, Charles University,
  2018.

\bibitem{La1950}
C.~Lanczos.
\newblock An iteration method for the solution of the eigenvalue problem of
  linear differential and integral operators.
\newblock {\em J. Research Nat. Bur. Standards}, 45:255--282, 1950.

\bibitem{La1999}
D.~P. Laurie.
\newblock Accurate recovery of recursion coefficients from {G}aussian
  quadrature formulas.
\newblock {\em J. Comput. Appl. Math.}, 112(1-2):165--180, 1999.

\bibitem{Me2020}
G.~Meurant.
\newblock On prescribing the convergence behavior of the conjugate gradient
  algorithm.
\newblock {\em Numer. Algorithms}, 84(4):1353--1380, 2020.

\bibitem{MeSt2006}
G.~Meurant and Z.~Strako{\v{s}}.
\newblock The {L}anczos and conjugate gradient algorithms in finite precision
  arithmetic.
\newblock {\em Acta Numer.}, 15:471--542, 2006.

\bibitem{OLStTi2007}
D.~P. O'Leary, Z.~Strako{\v{s}}, and P.~Tich{\'y}.
\newblock On sensitivity of {G}auss-{C}hristoffel quadrature.
\newblock {\em Numer. Math.}, 107(1):147--174, 2007.

\bibitem{Pa1976}
C.~C. Paige.
\newblock Error analysis of the {L}anczos algorithm for tridiagonalizing a
  symmetric matrix.
\newblock {\em J. Inst. Math. Appl.}, 18(3):341--349, 1976.

\bibitem{Pa1980a}
C.~C. Paige.
\newblock Accuracy and effectiveness of the {L}anczos algorithm for the
  symmetric eigenproblem.
\newblock {\em Linear Algebra Appl.}, 34:235--258, 1980.

\bibitem{PaSa1975}
C.~C. Paige and M.~A. Saunders.
\newblock Solutions of sparse indefinite systems of linear equations.
\newblock {\em SIAM J. Numer. Anal.}, 12(4):617--629, 1975.

\bibitem{B:Pa1980}
B.~N. Parlett.
\newblock {\em The symmetric eigenvalue problem}.
\newblock Prentice-Hall Inc., Englewood Cliffs, N.J., 1980.
\newblock Prentice-Hall Series in Computational Mathematics.

\bibitem{SaSch1986}
Y.~Saad and M.~Schultz.
\newblock {GMRES}: a generalized minimal residual algorithm for solving
  nonsymmetric linear systems.
\newblock {\em Siam Journal on Scientific and Statistical Computing},
  7:856--869, 1986.

\bibitem{Sc1979}
D.~S. Scott.
\newblock How to make the {L}anczos algorithm converge slowly.
\newblock {\em Math. Comp.}, 33(145):239--247, 1979.

\bibitem{St1991}
Z.~Strako{\v{s}}.
\newblock On the real convergence rate of the conjugate gradient method.
\newblock {\em Linear Algebra Appl.}, 154/156:535--549, 1991.

\bibitem{Wu2006}
W.~W{\"u}lling.
\newblock On stabilization and convergence of clustered {R}itz values in the
  {L}anczos method.
\newblock {\em SIAM J. Matrix Anal. Appl.}, 27(3):891--908 (electronic), 2005.

\bibitem{W2005}
W.~W{\"u}lling.
\newblock The stabilization of weights in the {L}anczos and conjugate gradient
  method.
\newblock {\em BIT}, 45(2):395--414, 2005.

\end{thebibliography}

\end{document}